\documentclass[oneside]{amsart}
\usepackage[stable]{footmisc}
\usepackage{setspace}
\usepackage{sseq,pgffor,comment}
\usepackage[a4paper]{geometry}
\usepackage{amsmath,amsthm,amssymb,array,enumerate,graphicx,etoolbox,tabularx,mathtools}
\usepackage[all]{xy}
\usepackage{tikz-cd}
\usepackage{xcolor} 
\usepackage[pagebackref]{hyperref}
\definecolor{dark-red}{rgb}{0.5,0.15,0.15}
\definecolor{dark-blue}{rgb}{0.15,0.15,0.6}
\definecolor{dark-green}{rgb}{0.15,0.6,0.15}
\hypersetup{
    colorlinks, linkcolor=dark-red,
    citecolor=dark-blue, urlcolor=dark-green
}

\renewcommand*{\backref}[1]{}
\renewcommand*{\backrefalt}[4]{%
  \ifcase #1 %
No citations.
  \or
(cit. on p. #2).%
  \else
(cit on pp. #2).%
  \fi%
}
\numberwithin{equation}{section}
\usepackage[nameinlink,capitalise,noabbrev]{cleveref}
\newcount\tmp
\newcommand{\Z}{\mathbb{Z}}
\newcommand{\Q}{\mathbb{Q}}
\newcommand{\G}{\mathbb{G}}

\newcommand{\W}{\mathbb{W}}
\newcommand{\Zp}{\Z_{(p)}}
\newcommand{\F}{\mathbb{F}}

\newcommand{\xr}{\xrightarrow}

\newcommand{\Hom}{\operatorname{Hom}}
\newcommand{\Comod}{\operatorname{Comod}}
\newcommand{\wComod}{\widehat{\Comod}}
\newcommand{\Tor}{\operatorname{Tor}}
\newcommand{\Ext}{\mathrm{Ext}}

\newcommand{\Mod}{\widehat{\operatorname{Mod}}}
\newcommand{\holim}{\operatorname{holim}}
\newcommand{\wExt}{\widehat{\Ext}}

\newcommand{\btimes}{\boxtimes}

\newcommand{\fm}{\mathfrak{m}}
 

\DeclareMathOperator{\Aut}{Aut}
\DeclareMathOperator{\Gal}{Gal}

\usepackage{cleveref}

\newtheorem{theorem}{Theorem}[section]

\newtheorem{lemma}[theorem]{Lemma} 
\newtheorem{cor}[theorem]{Corollary}
\newtheorem{example}[theorem]{Example}
\newtheorem{prop}[theorem]{Proposition} \theoremstyle{definition}
\newtheorem{rem}[theorem]{Remark} 
\newtheorem{definition}[theorem]{Definition}

\Crefname{cor}{Corollary}{Corollaries}
\Crefname{conjecture}{Conjecture}{Conjectures}
\Crefname{rem}{Remark}{Remarks}
\Crefname{question}{Question}{Questions}
\Crefname{prop}{Proposition}{Propositions}	
\newcommand{\mE}{E^\vee_*}
\newcommand{\htimes}{\widehat{\otimes}}
\AtBeginDocument{%
   \def\MR#1{}
}
\renewcommand{\frak}{\mathfrak}
\begin{document}
\title{The $E_2$-term of the $K(n)$-local $E_n$-Adams spectral sequence}

\author{Tobias Barthel}
\address{Max Planck Institute for Mathematics, Bonn, Germany}
\author{Drew Heard}
\address{Max Planck Institute for Mathematics, Bonn, Germany}
\date{\today}

\begin{abstract}
    Let $E=E_n$ be Morava $E$-theory of height $n$. In~\cite{devhop04} Devinatz and Hopkins introduced the $K(n)$-local $E_n$-Adams spectral sequence and showed that, under certain conditions, the $E_2$-term of this spectral sequence can be identified with continuous group cohomology.  We work with the category of $L$-complete $\mE E$-comodules, and show that in a number of cases the $E_2$-term of the above spectral sequence can be computed by a relative $\Ext$ group in this category. We give suitable conditions for when we can identify this $\Ext$ group with continuous group cohomology. 
\end{abstract}
\maketitle
\section*{Introduction}

Let $E_n$ denote the $n$-th Morava $E$-theory (at a fixed prime $p$), the Landweber exact cohomology theory with coefficient ring
\[
\pi_*(E_n) = \W(\F_{p^n})[\![u_1,\ldots,u_{n-1}]\!][u^{\pm 1}],
\]
where each $u_i$ is in degree 0, and $u$ has degree -2. Here $\W(\F_{p^n})$ refers to the Witt vectors over the finite field $\F_{p^n}$ (an unramified extension of $\Z_p$ of degree $n$). Note that $E_0$ is a complete local regular Noetherian ring with maximal ideal $\frak{m} = (p,u_1,\ldots,u_{n-1})$.  

Unless indicated otherwise, let us fix an integer $n \ge 1$ and write $E$ instead of $E_n$ throughout. The cohomology theory $E$ plays a very important role in the chromatic approach to stable homotopy theory, in particular in the understanding of the $K(n)$-local homotopy category (see, for example,~\cite{hs99}). 

The formal group law associated to $E_*$ is the universal deformation of the Honda formal group law $\Gamma_n$ of height $n$ over $\F_{p^n}$. Let $\G_n = \Aut(\Gamma_n) \rtimes \Gal(\F_{p^n}/\F_p)$ denote the $n$-th (extended) Morava stabilizer group. Lubin-Tate theory implies that $\G_n$ acts on the ring $E_*$, and Brown representability implies that $\G_n$ acts on $E$ itself in the stable homotopy category. The Goerss--Hopkins--Miller theorem~\cite{rezkhm,gh04} implies that this action can be taken to be via $E_\infty$-ring maps. 

In general $\G_n$ is a profinite group, and it is not clear how to form the homotopy fixed points with respect to such groups (although progress has been made in this area; see~\cite{davis06,behdav,quicklt}). Nonetheless, in~\cite{devhop04} Devinatz and Hopkins defined $E_\infty$-ring spectra $E^{hG}$  for $G \subset \G_n$ a closed subgroup of the Morava stabilizer group, which behave like continuous homotopy fixed point spectra (and indeed if $G$ is finite they agree with the usual construction of homotopy fixed points). Remarkably they showed that there is an equivalence $E^{h\G_n} \simeq L_{K(n)}S^0$, a result expected since the work of Morava~\cite{mor85}. Davis~\cite{davis06}, Behrens--Davis~\cite{behdav} and Quick~\cite{quicklt} have given constructions of homotopy fixed point spectra with respect to the continuous action of $G$ on $E$, and these agree with the construction of Devinatz and Hopkins. 

Devinatz and Hopkins additionally showed that for any spectrum $Z$ there is a strongly convergent spectral sequence 
\[
E_2^{\ast,\ast}=H_c^*(G,E^\ast Z) \Rightarrow (E^{hG})^\ast Z
\]
which is a particular case of a spectral sequence known as the $K(n)$-local $E$-Adams spectral sequence~\cite[Appendix A]{devhop04}. Here, for a closed subgroup $G \subset \G_n$ the continuous cohomology of $G$ with coefficients in a topological $\G_n$-module $N$ is defined using the cochain complex $\Hom^c(G^\bullet,N)$ (see the discussion before the proof of~\Cref{thm:COR}).

 Using homology instead of cohomology Devinatz and Hopkins identified conditions~\cite[Proposition 6.7]{devhop04} under which the $K(n)$-local $E$-Adams spectral sequence takes the form
\[
E_2^{\ast,\ast}=H_c^*(\G_n,E_*Z) \Rightarrow \pi_*L_{K(n)}Z. 
\]
It was remarked that this was probably not the most general result. In many cases the $E_2$-term of Adams-type spectral sequences can be calculated by $\Ext$ groups (for example~\cite[Chapter 2]{ravgreen}). Thus we ask the following two questions:
\begin{enumerate}[(a)]
	\item Can the $E_2$-term of the $K(n)$-local $E$-Adams spectral sequence be calculated by a suitable $\Ext$ group?
	\item In what generality can we identify the $E_2$-term with continuous group cohomology?
\end{enumerate}

In this document we give partial answers to both these questions. Some work on the second problem has been done previously, and we provide a comparison between some known results and our results. 

In the $K(n)$-local setting the natural functor to consider for a spectrum $X$ is not $E_*X$ but rather $\mE X \coloneqq \pi_*L_{K(n)}(E \wedge X)$. The use of this completed version of $E$-homology becomes very important in understanding the $E_2$-term of this spectral sequence.\footnote{This also gives one reason why the case of continuous cohomology with coefficients in $E^*Z$ is easier than in $\mE Z$ for any spectrum $Z$; since $F(Z,\Sigma^k E)$ is already $K(n)$-local for any $k \in \Z$ (since $E$ is), there is no need for a `completed' version of $E$-cohomology.} 
This is not just an $E_*$-module, but rather an $L$-complete $E_*$-module, and based on work of Baker~\cite{bakerlcomplete} we work in the category of $L$-complete $\mE E$-comodules. This category is not abelian, and so we use the methods of relative homological algebra to define a relative $\Ext$ functor for certain classes of objects in the category, which we denote by $\wExt_{\mE E}^{s,t}(-,-)$. The following is our answer for (a). 
\theoremstyle{plain}
\newtheorem*{thm:ass}{\Cref{thm:ass}}

\begin{thm:ass}
     Let $X$ and $Y$ be spectra and suppose that $\mE X$ is pro-free, and $\mE Y$ is either a finitely-generated $E_*$-module, pro-free, or has bounded $\frak{m}$-torsion (i.e., is annihilated by some power of $\frak{m}$). Then the $E_2$-term of the $K(n)$-local $E$-Adams spectral sequence with abutment $\pi_{t-s}F(X,L_{K(n)}Y)$ is
    \[
E_2^{s,t} =\wExt^{s,t}_{\mE E}(\mE X,\mE Y). 
    \]

 \end{thm:ass}

Our answer to question (b) is the following.
\theoremstyle{plain}
\newtheorem*{thm:COR}{\Cref{thm:COR}}
\begin{thm:COR}
Suppose that $X$ is a spectrum such that $\mE X$ is either a finitely-generated $E_*$-module, pro-free, or has bounded $\frak{m}$-torsion. 
Then, for the $K(n)$-local $E$-Adams spectral sequence with abutment $\pi_*L_{K(n)}X$, there is an isomorphism
    \[
E_2^{\ast,\ast} = \wExt^{\ast,\ast}_{\mE E}(E_*,\mE X) \simeq H_c^\ast(\G_n,\mE X).
    \]
\end{thm:COR}
We then compare this to some of the known results in the literature.

We have two applications of these results. Firstly, we can almost immediately extend a result of Goerss--Henn--Mahowald--Rezk, used in their construction of a resolution of the $K(2)$-local sphere at the prime 3~\cite{GHMR}, from finite subgroups of $\G_n$ to arbitrary closed subgroups. 

The second application appears to work at height $n=1$ only. Here we construct a spectral sequence with $E_2$-term $L_i\Ext^{s,t}_{E_*E}(E_*X,E_*Y)$, where $E_*X$ is a projective $E_*$-module, $E_*Y$ a flat $E_*$-module, and $L_i$ refers to the derived functor of completion on the category of $\Zp$-modules.  We show that the abutment of this spectral sequence is $\wExt_{\mE E}^{s-i,t}(\mE X,\mE Y)$ and calculate this when $X = Y = S^0$ at the prime 2. 
 \section*{Acknowledgements}
This work is an extension of part of the second author's PhD thesis at Melbourne University, supervised by Craig Westerland. We thank Craig for his tireless help and encouragement. We also thank Daniel Davis for helpful comments on an early draft of this paper, as well as Mark Hovey and Gabriel Valenzuela for helpful discussions. We thank the referee for numerous helpful comments and for catching a number of careless errors in an earlier version of this paper. The second author was supported by a David Hay postgraduate writing-up award provided by Melbourne University. The authors thank the Max Planck Institute for Mathematics in Bonn for its support and hospitality. 
\section{$L$-completion and $L$-complete comodules}
\subsection{$L$-completion}\label{sec:lcompletion}
It is now well understood (see, for example~\cite{hs99}) that in the $K(n)$-local setting the functor $\mE(-) = \pi_*L_{K(n)}(E \wedge -)$, from spectra to $E_*$-modules, mentioned in the introduction is a more natural covariant analogue of $E^*(-)$ than ordinary $E_*$-homology, despite the fact that it is not a homology theory. It is equally well understood that this functor is naturally thought of as landing in the category $\widehat{\text{Mod}}_{E_*}$ of $L$-complete $E_*$-modules, rather than the category of $E_*$-modules. We review the basics of this category now; for more details see~\cite{hs99,barthel2013completed,hovey2004ss,rezk2013}.

\begin{rem}\label{rem:bousfieldlocalisation}
  Since we always work with $E$-modules there is some ambiguity to the type of Bousfield localisation we are using. Recall that $L_{K(n)}$ denotes Bousfield localisation with respect to Morava $K$-theory $K(n)$ on the category of spectra. Let $L_{K(n)}^E$ denote Bousfield localisation on the category of $E$-modules. Suppose now that $M$ is an $E$-module. Then by~\cite[Lemma 4.3]{bakerjean} there is an equivalence $L_{K(n)}M \simeq L^{E}_{K(n) \wedge E}M$. But by~\cite[Proposition 2.2]{hovey08fil} the latter is just $L^E_{K(n)}M$ and so it does not matter if we use $L_{K(n)}$ or $L_{K(n)}^E$. 

\end{rem}

To keep the theory general, suppose that $R$ is a complete local Noetherian graded ring with a unique maximal homogeneous ideal $\mathfrak{m}$, generated by a regular sequence of $n$ homogeneous elements. Our assumptions imply that the (Krull) dimension of $R$ is $n$. Let $\operatorname{Mod}_R$ denote the category of graded $R$-modules, where the morphisms are the morphisms of $R$-modules that preserve the grading.

Recall that given an $R$-module $M$, the completion of $M$ (at $\mathfrak{m}$) is
\[
M^\wedge_\frak{m} = \varprojlim_k M/\mathfrak{m}^kM. 
\]
Here we must take the limit in the graded sense. This is functorial, but completion is not right (nor in fact left) exact; the idea is to then replace completion with its zeroth derived functor.
\begin{definition}
For $s \ge 0$ let $L_s(-):\operatorname{Mod}_R \to \operatorname{Mod}_R$ be the $s$-th left derived functor of the completion functor $(-)^\wedge_\mathfrak{m}$. 
\end{definition}
Since completion is not right exact it is \emph{not} true that $L_0M \simeq M^\wedge_\mathfrak{m}$. In fact the natural map $M \to M^\wedge_\mathfrak{m}$ factors as the composite $M \xr{\eta_M} L_0M  \xr{\epsilon_M} M^\wedge_\mathfrak{m}$.

\begin{definition}
We say that $M$ is $L$-complete if $\eta_M$ is an isomorphism of $R$-modules. 
\end{definition}
The map $\epsilon_M$ is surjective with kernel
\begin{equation}\label{eq:kernel}
\varprojlim{}^1_k \Tor_1^{R}(R/\mathfrak{m}^k,M);
\end{equation}
in general these derived functors fit into an exact sequence~\cite[Theorem A.2]{hs99} 
\[
0 \to \varprojlim{}^1_k \Tor^R_{s+1}(R/\mathfrak{m}^k,M) \to L_sM \to \varprojlim_k \Tor^R_s(R/\mathfrak{m}^k,M) \to 0,
\]
and vanish if $s<0$ or $s>n$.

Let $\Mod_R$ denote the subcategory of $\operatorname{Mod}_R$ consisting of those graded $R$-modules $M$ for which $\eta_M$ is an isomorphism. This category is a bicomplete full abelian subcategory of the category of graded $R$-modules, and is closed under extensions and inverse limits formed in $\operatorname{Mod}_R$. One salient feature of this category is that $\Ext^s_{\Mod_R}(M,N) \simeq \Ext^s_{R}(M,N)$ for all $s \ge 0$ whenever $M$ and $N$ are $L$-complete $R$-modules~\cite[Theorem 1.11]{hovey2004ss}. The tensor product of $L$-complete modules need not be $L$-complete; we write $M \btimes_R N \coloneqq L_0(M \otimes_R N)$. By~\cite[Proposition A.6]{hs99} this gives $\Mod_R$ the structure of a symmetric monoidal category. 

\begin{rem}\label{rem:tamemodules}

We will use the following properties of $L$-completion repeatedly:
\begin{enumerate}[(i)]
   \item If $M$ is a flat $R$-module, then $L_0M = M^\wedge_\mathfrak{m}$ is flat as an $R$-module and thus $L_sM = 0$ for $s>0$ (see~\cite[Corollary 1.3]{hovey2004ss} or~\cite[Proposition A.15]{barthel2013completed});
   \item If $M$ is a finitely-generated $R$-module, then $L_0M = M$ and $L_sM = 0$ for $s>0$~\cite[Proposition A.4, Theorem A.6]{hs99}; and,
   \item If $M$ is a bounded $\frak{m}$-torsion module, then $L_0M = M$ and $L_sM = 0$ for $s>0$. 
 \end{enumerate}  
 The last item follows from~\cite[Theorem A.6]{hs99} and the observation that for large enough $k$ there are equivalences (by~\cite[Proposition A.4]{hs99})
    \begin{equation*}
        \begin{split}
            L_0M & \simeq L_0(M \otimes_{R} R/\frak{m}^k) \\
            & \simeq L_0M \otimes_R R/\frak{m}^k \\
            & \simeq M \otimes_R R/\frak{m}^k \\
            & \simeq M,
        \end{split}
    \end{equation*}
    so that $M$ is $L$-complete. Modules $M$ that have $L_sM = 0$ for $s>0$ are known as \emph{tame}. For example, $L$-complete modules are always tame.  
    \end{rem}
\begin{example}\label{examp:zp}
    Let $R = \Zp$ and $\frak{m} = (p)$. Since $\Zp$ has Krull dimension 1 the only potential non-zero derived functors are $L_0$ and $L_1$. By~\cite[Proposition 5.2]{barthel2013completed}, $L$-completion with respect to $\Zp$ naturally lands in the category of $\Z_p$-modules.

It is immediate from the remark above that $L_0\Zp = \Z_p$ and $L_i\Zp =0$ for $i>0$.  By~\cite[Theorem A.2]{hs99} for any $\Zp$-module $M$ we have
\[
 L_0M = \Ext^1_{\Zp}(\Z/p^\infty,M)\quad \text{ and } \quad L_1M = \Hom_{\Zp}(\Z/p^\infty,M) \simeq \varprojlim_r \Hom_{\Zp}(\Z/p^r,M).
\]
If $M$ is any injective $\Zp$-module $M$, for example if $M =\Q/\Zp$, then it follows from this description that $L_0M = 0$.  
On the other hand the inverse system defined above gives $L_1(\Q/\Zp) = \Z_p$.

The discussion above also shows that if $M$ is any bounded $p$-torsion $\Zp$-module then it is $L$-complete and hence tame. 
\end{example}
Suppose now that $M$ is a flat $R$-module so that, by Lazard's theorem, we can write it canonically as a filtered colimit over finite free modules, $M = \varinjlim_j F_j$. Since $\Hom_R(F_j,L_0N)$ is $L$-complete for any $j \in J$, the same is true for $\varprojlim_j \Hom_R(F_j,L_0N) = \Hom_R(M,L_0N)$, and hence we get a natural factorization
\[\xymatrix{& L_0\Hom_R(M,N) \ar@{-->}[d] \\
\Hom_R(M,N) \ar[ru] \ar[r] & \Hom_R(M,L_0N)}\]
for arbitrary $N$. 
\begin{prop}
If $M$ is projective and $N$ is flat, then the natural map
\[\xymatrix{L_0\Hom_R(M,N) \ar[r] & \Hom_R(M,L_0N)}\]
is an isomorphism of $L$-complete $R$-modules.
\end{prop}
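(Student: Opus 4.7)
The plan is to reduce to the case where $M$ is free by exploiting projectivity, and then to show that $L_0$ commutes with arbitrary products of flat modules. Throughout, both functors $L_0\Hom_R(-,N)$ and $\Hom_R(-,L_0N)$ are additive in the first variable, and the natural transformation between them is compatible with direct sum decompositions.

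If $M$ is projective, choose a free module $F$ and a module $M'$ with $F \cong M \oplus M'$; both sides then split as direct sums of the corresponding terms for $M$ and $M'$, compatibly with the natural map. It therefore suffices to prove the proposition when $M = F$ is free, say $F = \bigoplus_{i \in I} R$. In this case $\Hom_R(F,-) \cong \prod_{i \in I}(-)$, and the content reduces to showing that the natural map $L_0\prod_{i \in I} N \to \prod_{i \in I} L_0 N$ is an isomorphism whenever $N$ is flat.

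For this free case I use that $R$ is Noetherian, so by Chase's theorem arbitrary products of flat $R$-modules are flat; hence $\prod_{i \in I} N$ is flat and \Cref{rem:tamemodules}(i) gives $L_0 \prod_{i \in I} N \cong (\prod_{i \in I} N)^\wedge_{\mathfrak m}$. The next step is the identity $\mathfrak m^k \prod_{i \in I} N = \prod_{i \in I} \mathfrak m^k N$: picking finitely many generators $r_1,\ldots,r_s$ of $\mathfrak m^k$ (using Noetherianity) and writing each component $x_i = \sum_j r_j n_{i,j}$ of an element of $\prod_i \mathfrak m^k N$, one bundles the $n_{i,j}$ into elements $y_j \in \prod_I N$ realizing $(x_i)$ as $\sum_j r_j y_j$. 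Combined with the fact that limits commute with products, this gives
\[
L_0 \prod_{i \in I} N \;\cong\; \varprojlim_k \prod_{i \in I} N/\mathfrak m^k N \;\cong\; \prod_{i \in I} \varprojlim_k N/\mathfrak m^k N \;\cong\; \prod_{i \in I} L_0 N,
\]
and naturality identifies this composite with the map in the statement.

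The key obstacle is the last identification. Both Chase's theorem (providing flatness of the product) and the identity $\mathfrak m^k \prod_I N = \prod_I \mathfrak m^k N$ depend essentially on the Noetherianity of $R$; once these are available, the rest of the argument is formal.
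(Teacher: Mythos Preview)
Your proof is correct and follows essentially the same approach as the paper: reduce to $M$ free, use Noetherianity to ensure $\prod_I N$ is flat so that $L_0$ agrees with $\mathfrak m$-adic completion, and then show completion commutes with the product. The only cosmetic difference is that the paper phrases the key step as the isomorphism $(\prod_I N)\otimes_R R/\mathfrak m^k \cong \prod_I (N\otimes_R R/\mathfrak m^k)$ and cites the general fact that tensoring with a finitely presented module commutes with products, whereas you prove the equivalent statement $\mathfrak m^k\prod_I N = \prod_I \mathfrak m^k N$ directly from finite generation of $\mathfrak m^k$.
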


\begin{proof}
It is enough to show the claim for $M = \bigoplus_I R$ free. Since $R$ is Noetherian, products of flat modules are flat, so we get
\[L_0\Hom_R(M,N) = L_0\prod_I N = \varprojlim_k ((\prod_I N) \otimes R/\fm^k)\]
and similarly
\[\Hom_R(M,L_0N) = \prod_I \varprojlim_k (N \otimes R/\fm^k) = \varprojlim_k \prod_I (N \otimes R/\fm^k).\]
Therefore, it suffices to show that the natural map
\[ \epsilon: (\prod_I N) \otimes R/\fm^k \to \prod_I (N \otimes R/\fm^k)\]
is an isomorphism for all $k$. Since $R/\fm^k$ is finitely-presented, this is true by~\cite[Proposition 4.44]{MR1653294}, and the proposition follows. 
\end{proof}
\begin{cor}\label{cor:homlcompletion}
For $M$ projective and $N$ flat, there are isomorphisms
\[L_s\Hom_R(M,N) = 
\begin{cases}
\Hom_{\operatorname{\widehat{Mod}}_R}(L_0M,L_0N) & \text{if } s=0 \\
0 & \text{otherwise.}
\end{cases}\]
\end{cor}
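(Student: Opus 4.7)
The plan is to handle the two cases separately, leveraging the preceding proposition for $s=0$ and the fact that flat modules are tame for $s>0$.

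For the case $s=0$, the proposition already gives a natural isomorphism $L_0\Hom_R(M,N) \cong \Hom_R(M,L_0N)$. Since $N$ is flat, Remark 1.5(i) implies that $L_0N = N^\wedge_\fm$ is $L$-complete. Then I would use the fact that $L_0\colon \operatorname{Mod}_R \to \widehat{\operatorname{Mod}}_R$ is left adjoint to the inclusion (this is standard; $\eta_M\colon M \to L_0M$ exhibits $L_0M$ as the universal $L$-complete receptacle for maps out of $M$, as can be extracted from the universal property recorded in, e.g., \cite{hovey2004ss}). Applied to the $L$-complete module $L_0N$, this adjunction gives $\Hom_R(M,L_0N) \cong \Hom_{\widehat{\operatorname{Mod}}_R}(L_0M,L_0N)$, completing this case.

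For the vanishing $L_s\Hom_R(M,N) = 0$ when $s>0$, I would argue that $\Hom_R(M,N)$ is itself flat, so that Remark 1.5(i) forces all higher $L$-completion derived functors to vanish. Indeed, since $M$ is projective it is a retract of some free module $\bigoplus_I R$, and retracts of flat modules are flat, so it suffices to check flatness of $\Hom_R(\bigoplus_I R, N) = \prod_I N$. Because $R$ is Noetherian and $N$ is flat, arbitrary products of copies of $N$ remain flat, which is the key input here.

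The main obstacle, such as it is, lies in correctly invoking the adjunction $L_0 \dashv i$ for the inclusion $i\colon \widehat{\operatorname{Mod}}_R \hookrightarrow \operatorname{Mod}_R$, since this is stated sparingly in the excerpt; the rest is a direct assembly of the proposition and Remark 1.5(i). Neither step requires computation beyond what has already been set up.
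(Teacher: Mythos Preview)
Your proposal is correct and follows essentially the same approach as the paper: the proposition for $s=0$, and flatness (hence tameness) of $\Hom_R(M,N)$ for $s>0$. You are in fact more explicit than the paper on two points it leaves tacit---namely, the passage from $\Hom_R(M,L_0N)$ to $\Hom_{\widehat{\operatorname{Mod}}_R}(L_0M,L_0N)$ via the reflection $L_0 \dashv i$, and the reason $\Hom_R(M,N)$ is flat (products of flats over a Noetherian ring, then retracts).
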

\begin{proof}
The first statement is a direct consequence of the previous proposition. For the case of $s>0$ note that $\Hom_R(M,N)$ is flat, hence tame. 
\end{proof}
\begin{rem}
	Using work of Valenzuela~\cite{valenzuela} it is possible to construct a spectral sequence 
	\[
E_2^{s,t} = L_p \Ext^q_R(M,N) \Rightarrow \Ext_R^{p+q}(M,L_{R/\fm}N),
	\]
where $M$ and $N$ are arbitrary $R$-modules and $L_{R/\fm}$ is the total left derived functor of $L_0$. Specialising to $M$ projective and $N$ tame gives the above corollary.
\end{rem}
\subsection{Completed $E$-homology}
We now specialise to the case where $R = E_*$. By~\cite[Proposition 8.4]{hs99} the functor $\mE(-)$ always takes values in $\Mod_{E_*}$. This is in fact a special case of the following theorem.
\begin{prop}\cite[Corollary 3.14]{barthel2013completed}\label{prop:lcompletion}
	An $E$-module $M$ is $K(n)$-local if and only if $\pi_*M$ is an $L$-complete $E_*$-module.
\end{prop}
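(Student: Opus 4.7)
The plan is to exploit the identification between $K(n)$-localization and derived $\fm$-adic completion on the category of $E$-modules, combined with the local homology spectral sequence for completion. By \Cref{rem:bousfieldlocalisation} I may replace $L_{K(n)} M$ by $L_{K(n)}^E M$ throughout. A standard argument using the Koszul tower on a regular sequence generating $\fm$ (or equivalently the pro-system of generalised Moore spectra $E/\fm^k$) produces both an equivalence $L_{K(n)}^E M \simeq \holim_k (M/\fm^k)$ and a strongly convergent spectral sequence
\[
E_2^{s,t} = L_s \pi_t M \Rightarrow \pi_{t-s} L_{K(n)}^E M,
\]
with $E_2^{s,t} = 0$ for $s > n$ since $\fm$ is generated by a regular sequence of length $n$.

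For the direction $(\Leftarrow)$, suppose $\pi_* M$ is $L$-complete. Then by \Cref{rem:tamemodules} it is tame, so $L_s \pi_t M = 0$ for $s>0$ and $L_0 \pi_t M = \pi_t M$. The spectral sequence collapses on the $s=0$ line and the localisation map induces an isomorphism $\pi_* M \xr{\cong} \pi_* L_{K(n)}^E M$, so $M \simeq L_{K(n)}^E M$ is $K(n)$-local. For $(\Rightarrow)$ it suffices to show that $\pi_t L_{K(n)}^E M$ is always $L$-complete, since then a $K(n)$-local $M$ automatically satisfies $\pi_* M \cong \pi_* L_{K(n)}^E M \in \Mod_{E_*}$. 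I would verify this by first observing that each module $L_s \pi_t M$ is itself $L$-complete: the exact sequence displayed just after \Cref{eq:kernel} expresses $L_s \pi_t M$ as an extension of $\varprojlim_k \Tor_s^{E_*}(E_*/\fm^k, \pi_t M)$ by $\varprojlim{}^1_k \Tor_{s+1}^{E_*}(E_*/\fm^k, \pi_t M)$, and each term in these towers is a bounded $\fm$-torsion module, hence $L$-complete by \Cref{rem:tamemodules}(iii); since $\Mod_{E_*}$ is closed under (derived) inverse limits and extensions, both pieces and their extension land in $\Mod_{E_*}$. Because the spectral sequence has bounded vertical filtration ($s \le n$), $\pi_t L_{K(n)}^E M$ is an iterated extension of the $L$-complete modules $\{L_s \pi_t M\}_{0 \le s \le n}$, and closure of $\Mod_{E_*}$ under extensions finishes the argument.

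The main obstacle is justifying the equivalence $L_{K(n)}^E M \simeq \holim_k M/\fm^k$ together with its associated convergent spectral sequence in the required generality; this is really the content of the cited result and rests on a Greenlees--May style analysis of the Koszul tower for the regular sequence generating $\fm$, together with the Noetherian hypothesis on $E_*$ that guarantees termwise vanishing and strong convergence.
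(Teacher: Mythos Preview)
The paper does not actually prove this proposition; it is quoted directly from \cite[Corollary 3.14]{barthel2013completed}, with only a remark afterwards explaining the special case $M = E \wedge X$ via the Milnor sequence coming from the tower of generalised Moore spectra. Your argument is essentially the standard proof (and the one behind the cited reference): the spectral sequence you invoke is exactly the one the paper later cites as \cite[Theorem 2.3]{hovey08fil}, and the identification $L_{K(n)}^E M \simeq \holim_k (M/\fm^k)$ is the Greenlees--May style input underlying both references. So your approach and the literature's agree.

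One small imprecision in your $(\Rightarrow)$ direction: you assert that $\pi_t L_{K(n)}^E M$ is an iterated extension of the modules $L_s\pi_t M$ themselves, but in fact it is an iterated extension of the $E_\infty$-terms, which are only \emph{subquotients} of the $L_s\pi_{s+t} M$ (note also the index shift). This does no harm: the differentials are $E_*$-module maps between $L$-complete modules, and since $\Mod_{E_*}$ is a full abelian subcategory of $\operatorname{Mod}_{E_*}$, kernels, images, and hence all later pages remain $L$-complete. With that correction your argument goes through.
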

\begin{rem}
The case where $M = E \wedge X$, for $X$ an arbitrary spectrum, proved in~\cite{hs99}, uses a different method. In particular there is a tower of generalised Moore spectra $M_I$ such that $L_{K(n)}X \simeq \holim_I L_nX \wedge M_I$~\cite[Proposition 7.10]{hs99}. This gives rise to a Milnor sequence
\begin{equation}\label{eq:milnor}
0 \to \varprojlim_I{}^1 E_{\ast +1}(X \wedge M_I) \to \mE X \to \varprojlim_I E_\ast (X \wedge M_I) \to 0,
\end{equation}
which by~\cite[Theorem A.6]{hs99} implies $\mE X$ is $L$-complete.
\end{rem}

The projective objects in $\Mod_{E_*}$ will be important for us. These are characterised in~\cite[Theorem A.9]{hs99} and~\cite[Proposition A.15]{barthel2013completed}.  
\begin{definition}
An $L$-complete $E_*$-module is pro-free if it is isomorphic to the completion (or, equivalently, $L$-completion) of a free $E_*$-module. Equivalently, these are the projective objects in $\Mod_{E_*}$.
\end{definition}
 
\begin{prop}\label{prop:completemodules}
If $\mE X$ is either finitely-generated as an $E_*$-module, pro-free, or has bounded $\frak{m}$-torsion, then $\mE X$ is complete in the $\frak{m}$-adic topology. 
\end{prop}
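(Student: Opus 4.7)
The plan is to treat the three cases separately, exploiting the short exact sequence
\[
0 \to \varprojlim{}^1_k \Tor^{E_*}_{1}(E_*/\mathfrak{m}^k,M) \to L_0M \xr{\epsilon_M} M^\wedge_{\mathfrak{m}} \to 0
\]
displayed before Remark \ref{rem:tamemodules}, together with the fact (Proposition \ref{prop:lcompletion}) that $\mE X$ is automatically $L$-complete, so that $\eta_{\mE X}\colon \mE X \to L_0(\mE X)$ is already an isomorphism. Hence in each case it is enough to produce the inverse, or equivalently to show that the $\varprojlim^1$ term vanishes, which will force $\epsilon_{\mE X}$ to be an isomorphism and hence $\mE X \cong (\mE X)^\wedge_{\fm}$.

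When $\mE X$ is finitely-generated this is classical, and also immediate from Remark \ref{rem:tamemodules}(ii): $E_*$ is a complete Noetherian local ring, so finitely-generated modules are $\fm$-adically complete. In the bounded $\fm$-torsion case, if $\fm^N\cdot\mE X = 0$, then $\mE X/\fm^k\mE X = \mE X$ for $k \geq N$, so the tower is eventually constant and $(\mE X)^\wedge_{\fm} = \mE X$ tautologically.

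The pro-free case is the main point. By definition $\mE X$ is the $L$-completion of a free $E_*$-module $F$, and by Remark \ref{rem:tamemodules}(i) such an $L$-completion is flat over $E_*$. Flatness kills $\Tor^{E_*}_1(E_*/\fm^k,\mE X)$ for every $k$, so the $\varprojlim^1$ term in the exact sequence above vanishes; therefore $\epsilon_{\mE X}$ is an isomorphism. Combining this with $\eta_{\mE X}$ being an isomorphism yields $\mE X \cong (\mE X)^\wedge_{\fm}$.

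The only step requiring a non-trivial observation is the pro-free case, where the key input is the flatness of an $L$-completed free module (Remark \ref{rem:tamemodules}(i)); the other two cases are essentially formal consequences of the definitions.
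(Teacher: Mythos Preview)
Your proof is correct and follows essentially the same route as the paper. The only cosmetic difference is in the pro-free case: the paper cites \Cref{rem:tamemodules}(i) directly for the statement that $L_0M = M^\wedge_{\fm}$ when $M$ is flat, whereas you unpack this by observing that flatness kills the $\Tor_1$ terms in the kernel sequence~\eqref{eq:kernel}, which is exactly how that remark is established in the first place.
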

\begin{proof}
   The case where $\mE X$ is finitely-generated follows from the fact that $E_*$ is complete and Noetherian. Since $\mE X$ is always $L$-complete and $L_0$-completion is idempotent, when $\mE X$ is pro-free (and hence flat) $L_0(\mE X) \simeq \mE X = (\mE X)^{\wedge}_\frak{m}$, so that $\mE X$ is complete. The case where $\mE X$ has bounded $\frak{m}$-torsion is clear. 
\end{proof}

\begin{rem}\label{rem:ktheory}
    The condition that $\mE X$ is pro-free is not overly restrictive. Let $K$ denote the 2-periodic version of Morava $K$-theory with coefficient ring $K_* = E_*/\frak{m} = \F_{p^n}[u^{\pm 1} ]$.  If $K_*X$ is concentrated in even degrees, then $\mE X$ is pro-free~\cite[Proposition 8.4]{hs99}. For example, this implies that $\mE E_n^{hF}$ is pro-free for any closed subgroup $F \subset \G_n$. By~\cite[Theorem 8.6]{hs99} $\mE X$ is finitely generated if and only if $X$ is $K(n)$-locally dualisable. 
\end{rem}

We will need the following version of the universal coefficient theorem (for $Y=S$ this is~\cite[Corollary 4.2]{hovey2004ss}). 
\begin{prop}\label{lem:UCT}
    Let $X$ and $Y$ be spectra. If $\mE X$ is pro-free, then
    \[
    \Hom_{E_*}(E^\vee_{\ast} X,E^\vee_{\ast} Y) \simeq \pi_*F(X,L_{K(n)}(E \wedge Y)).
    \]
\end{prop}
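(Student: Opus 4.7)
The plan is to identify both sides with the product $\prod_I \Sigma^{-n_i} \mE Y$ indexed by a pro-free basis of $\mE X$, and then match them via the natural comparison map. For the topological side, the smash--hom adjunction combined with the fact that $L_{K(n)}(E\wedge Y)$ is $K(n)$-local (and \Cref{rem:bousfieldlocalisation}) gives
\[
F(X, L_{K(n)}(E \wedge Y)) \simeq F_E\bigl(L_{K(n)}(E\wedge X),\, L_{K(n)}(E \wedge Y)\bigr),
\]
realising the left-hand side as a mapping spectrum in $K(n)$-local $E$-modules.

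Next, using that $\mE X$ is pro-free, I choose a topological basis $\{x_i\}_{i \in I}$ with $x_i$ in degree $n_i$ and lift each $x_i$ along the unit to an $E$-module map $\Sigma^{n_i} E \to L_{K(n)}(E \wedge X)$. Wedging these yields $\phi\colon \bigvee_{i\in I} \Sigma^{n_i} E \to L_{K(n)}(E \wedge X)$, and I claim that $L_{K(n)}\phi$ is an equivalence of $K(n)$-local $E$-modules: on $\pi_*$ it factors the canonical inclusion $\bigoplus_I \Sigma^{n_i} E_* \hookrightarrow L_0\bigoplus_I \Sigma^{n_i} E_* \cong \mE X$, which becomes an isomorphism after $L_0$-completion. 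Substituting, and using that $F_E(-, Z)$ turns wedges into products and is unaffected by $K(n)$-localisation of its source when $Z$ is $K(n)$-local, I obtain
\[
F_E\bigl(L_{K(n)}(E \wedge X),\, L_{K(n)}(E \wedge Y)\bigr) \simeq \prod_I \Sigma^{-n_i} L_{K(n)}(E \wedge Y),
\]
whose homotopy groups are $\prod_I \Sigma^{-n_i} \mE Y$.

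On the algebraic side, the $L$-completeness of $\mE Y$ and the adjunction between $L_0$ and the inclusion $\Mod_{E_*} \subset \operatorname{Mod}_{E_*}$ yield
\[
\Hom_{E_*}(\mE X, \mE Y) \cong \Hom_{E_*}\Bigl(\bigoplus_I \Sigma^{n_i} E_*,\, \mE Y\Bigr) \cong \prod_I \Sigma^{-n_i} \mE Y,
\]
matching the topological side. To finish, I verify that the natural comparison map $\pi_* F(-, L_{K(n)}(E \wedge Y)) \to \Hom_{E_*}(\mE(-), \mE Y)$ realises this identification, by tracking the images of the basis classes $x_i$.

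The main obstacle is the second paragraph: although the existence of the lift $\phi$ and the identification of $\pi_* L_{K(n)} \bigvee_I \Sigma^{n_i} E$ are essentially \cite[Proposition 8.4]{hs99}, for an arbitrary indexing set $I$ one must carefully check that $\pi_* L_{K(n)} \bigvee_I \Sigma^{n_i} E \cong L_0 \bigoplus_I \Sigma^{n_i} E_*$, using the Milnor sequence~\eqref{eq:milnor} and the vanishing of higher $L_s$ on flat modules from \Cref{rem:tamemodules}. For $Y = S^0$ the whole argument reduces to \cite[Corollary 4.2]{hovey2004ss}, providing a useful consistency check.
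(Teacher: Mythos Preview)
Your argument is correct and takes a genuinely different route from the paper's. Both proofs begin with the same reduction
\[
F(X, L_{K(n)}(E \wedge Y)) \simeq F_E\bigl(L_{K(n)}(E\wedge X),\, L_{K(n)}(E \wedge Y)\bigr),
\]
but diverge from there. The paper simply invokes Hovey's universal coefficient spectral sequence for $K(n)$-local $E$-modules~\cite[Theorem 4.1]{hovey2004ss},
\[
E_2^{s,t} = \Ext^{s,t}_{\Mod_{E_*}}(\pi_*M,\pi_*N) \Longrightarrow \pi_{t-s}F_E(M,N),
\]
and observes that pro-freeness of $\mE X$ means it is projective in $\Mod_{E_*}$, so the spectral sequence collapses to the $s=0$ line. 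You instead realise $L_{K(n)}(E\wedge X)$ explicitly as a $K(n)$-localised wedge $L_{K(n)}\bigvee_I \Sigma^{n_i}E$ indexed by a topological basis, and then compute both sides as the product $\prod_I \Sigma^{-n_i}\mE Y$. Your approach is more elementary in that it avoids quoting the spectral sequence as a black box---in effect you are unpacking by hand why it degenerates---but it trades that for more bookkeeping: the verification that $L_{K(n)}\phi$ is an equivalence (via~\cite[Theorem~2.3]{hovey08fil} and tameness of free modules), and the tracking of the comparison map through the product identifications. The paper's route is shorter and has the advantage that it would still say something (namely, produce higher $\Ext$ obstructions) when $\mE X$ is not pro-free.
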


\begin{proof}
    Let $M,N$ be $K(n)$-local $E$-module spectra. Note that $\pi_*M$ and $\pi_*N$ are always $L$-complete by~\Cref{prop:lcompletion}. Under such conditions Hovey~\cite[Theorem 4.1]{hovey2004ss}
     has constructed a natural, strongly and conditionally convergent, spectral sequence of $E_*$-modules\footnote{Note that we have regraded the spectral sequence in~\cite{hovey2004ss} to reflect the fact we use homology rather than cohomology.}
    \[
    E_2^{s,t} = \Ext^{s,t}_{\Mod_{E_*}}(\pi_{*}M,\pi_{*}N) \simeq \Ext_{E_*}^{s,t}(\pi_{*}M,\pi_{*}N) \Rightarrow \pi_{t-s}F_E(M,N).
    \]
    Set $M = L_{K(n)}(E \wedge X)$ and $N = L_{K(n)}(E \wedge Y)$. Note then that
    \[
    F_E(L_{K(n)}(E \wedge X),L_{K(n)}(E \wedge Y)) \simeq F_E(E \wedge X,L_{K(n)}(E \wedge Y)) \simeq F(X,L_{K(n)}(E \wedge Y)),
    \]
where the second isomorphism is~\cite[Corollary III.6.7]{EKMM},
    giving a spectral sequence
    \[
    E_2^{s,t} = \Ext^{s,t}_{\Mod_{E_*}}(E^\vee_{\ast} X,E^\vee_{\ast} Y) \simeq \Ext_{E_*}^{s,t}(E^\vee_{\ast} X,E^\vee_{\ast} Y) \Rightarrow \pi_{t-s}F(X,L_{K(n)}(E \wedge Y)).
    \]
    Since $\mE X$ is pro-free it is projective in $\Mod_{E_*}$ and so the spectral sequence collapses, giving the desired isomorphism. 
\end{proof}
\begin{rem}
    The map above can be described in the following way: given 
    \[
f:X \to L_{K(n)}(E \wedge Y)
    \]
    then the homomorphism takes
    \[
g:S \to L_{K(n)}(E \wedge X)
    \]
    to the element
    \[
S \xr{g} L_{K(n)}(E \wedge X) \xr{1 \wedge f} L_{K(n)}(E \wedge E \wedge Y) \xr{\mu \wedge 1} L_{K(n)} (E\wedge Y).
    \]
\end{rem}

\subsection{$L$-complete Hopf algebroids}
Since $\mE X$ always lands in the category of $L$-complete $E_*$-modules, one is led to wonder if $\mE X$ is a comodule over a suitable $L$-complete Hopf algebroid. The category of $L$-complete Hopf algebroids has previously been studied by Baker~\cite{bakerlcomplete}, and we now briefly review this work.

Suppose that $R$ is as in~\Cref{sec:lcompletion} and, additionally, $R$ is an algebra over some local subring $(k_0,\frak{m}_0)$ of $(R,\frak{m})$, such that $\frak{m}_0 = k_0 \cap \frak{m}$. 

We say $A \in \Mod_{k_0}$ is a ring object if it has an associative product $\phi:A \otimes_{k_0} A \to A$. An $R$-unit for $\phi$ is a $k_0$-algebra homomorphism $\eta:R \to A$. A ring object $A$ is $R$-biunital if it has two units $\eta_L,\eta_R:R \to A$ which extend to give a morphism $\eta_L \otimes \eta_R: R \otimes_{k_0} R \to A$. Such an object is called $L$-complete if it is $L$-complete as both a left and right $R$-module.

\begin{definition}\cite[Definition 2.3]{bakerlcomplete}
Suppose that $\Gamma$ is an $L$-complete commutative $R$-biunital ring object with left and right units $\eta_L,\eta_R:R \to \Gamma$, along with the following maps:
\begin{align*}
\Delta&: \Gamma \to \Gamma \btimes_R \Gamma \text{ (composition) } \\
\epsilon&: \Gamma \to  R \text{ (identity) } \\
c&: \Gamma \to \Gamma \text{ (inverse) }
\end{align*}
satisfying the usual identities (as in~\cite[Appendix A]{ravgreen}) for a Hopf algebroid. Then the pair $(R,\Gamma)$ is an \emph{$L$-complete Hopf algebroid }if $\Gamma$ is pro-free as a left $R$-module, and the ideal $\frak{m}$ is invariant, i.e., $\frak{m}\Gamma = \Gamma \frak{m}$. 
\end{definition} 
\begin{lemma}~\cite[Proposition 5.3]{bakerlcomplete} 
The pair $(E_*,\mE E)$ is an $L$-complete Hopf algebroid. 
\end{lemma}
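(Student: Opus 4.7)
The plan is to verify the four conditions in the definition of an $L$-complete Hopf algebroid for $(E_*, \mE E)$: that $\mE E$ is an $L$-complete commutative $E_*$-biunital ring object, that it carries structure maps $\Delta, \epsilon, c$ satisfying the Hopf algebroid axioms, that it is pro-free as a left $E_*$-module, and that $\frak{m}$ is invariant.

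First, I would construct all the structure maps from the corresponding maps of $K(n)$-local spectra. The Goerss--Hopkins--Miller theorem equips $L_{K(n)}(E \wedge E)$ with the structure of a commutative $E$-algebra, and taking $\pi_*$ produces the ring object structure on $\mE E$. The left and right units come from the two smash-inclusions $E \to L_{K(n)}(E \wedge E)$; the counit from the multiplication $L_{K(n)}(E \wedge E) \to E$; the antipode from the swap map; and the coproduct is induced by the map $L_{K(n)}(E \wedge E) \to L_{K(n)}(E \wedge E \wedge E)$ obtained by inserting the unit $S \to E$ in the middle factor. The Hopf algebroid identities then follow formally from the corresponding homotopy-commutative diagrams of spectra. $L$-completeness of $\mE E$ with respect to either $E_*$-module structure is immediate from~\Cref{prop:lcompletion} applied to $L_{K(n)}(E \wedge E)$ viewed as a $K(n)$-local $E$-module via either factor.

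Second, I would establish pro-freeness of $\mE E$ as a left $E_*$-module, which by~\Cref{rem:ktheory} reduces to checking that $K_*E$ is concentrated in even degrees --- this is a classical calculation going back to Morava and Hopkins--Strickland. Once pro-freeness is in hand, a Künneth-type argument using the tower of generalised Moore spectra from~\eqref{eq:milnor} (which are finite and commute with smash products, so the computation reduces modulo each $M_I$ to finitely-generated algebra over $E_*/J$ before reassembling via the Milnor sequence) yields the identification
\[
\mE(E \wedge E) \simeq \mE E \btimes_{E_*} \mE E,
\]
which is precisely what is needed for the coproduct $\Delta$ to land in the completed tensor product. Invariance of $\frak{m}$ follows by choosing generators $p, u_1, \ldots, u_{n-1}$ lifted from $BP_*$ via the orientation $BP \to E$: these are permanent cycles that receive the same value under $\eta_L$ and $\eta_R$ modulo decomposables, giving $\frak{m}\mE E = \mE E \frak{m}$ as ideals.

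The main obstacle will be the Künneth-type identification $\mE(E \wedge E) \simeq \mE E \btimes_{E_*} \mE E$, since completed tensor products are delicate outside the pro-free setting, and the argument requires pro-freeness of $\mE E$ to already be in place. The logical order is therefore: first pro-freeness via $K_*E$, then the Künneth formula, and only then the full Hopf algebroid structure; everything else is either formal or follows from established classical calculations of $K_*E_n$.
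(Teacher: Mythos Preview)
The paper does not prove this lemma at all; it simply cites Baker~\cite[Proposition 5.3]{bakerlcomplete} and moves on. Your proposal is therefore not a different route to the paper's proof but rather a self-contained verification that the paper chose to outsource.

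Your outline is essentially correct and follows the natural strategy. Pro-freeness via evenness of $K_*E$ is exactly the argument recorded in~\Cref{rem:ktheory}, and the K\"unneth identification $\mE(E \wedge E) \simeq \mE E \btimes_{E_*} \mE E$ is what the paper itself establishes later as the case $s=1$, $Y=S^0$ of~\Cref{lem:iteratedkunneth} (via the spectral sequence argument of~\Cref{lem:comodules} rather than your Moore-spectrum tower, though both approaches work once pro-freeness is in hand). One small imprecision worth flagging: your invariance argument for $\frak{m}$ is phrased as $\eta_L$ and $\eta_R$ agreeing ``modulo decomposables,'' but that is not quite the right congruence. What is actually needed is $\eta_R(\frak{m}) \subset \frak{m} \cdot \mE E$, and this follows from the $BP$-formula $\eta_R(v_i) \equiv v_i \pmod{I_i}$ (congruence modulo the ideal generated by the \emph{lower} generators, not modulo decomposables) pushed forward along $BP \to E$.
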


\begin{definition}~\cite[Definition 2.4]{bakerlcomplete}
Let $(R,\Gamma)$ be an $L$-complete Hopf algebroid. A left $(R,\Gamma)$-comodule $M$ is an $L$-complete $R$-module $M$ together with a left $R$-linear map $\psi:M \to \Gamma \btimes_R M$ which is counitary and coassociative.     
\end{definition}
We will usually refer to a left $(R,\Gamma)$-comodule as an $L$-complete $\Gamma$-comodule and we write $\wComod_\Gamma$ for the category of such comodules.

\begin{rem}
    In all cases we will consider, $\mE X$ will be a complete $E_*$-module, and so we could work in the category of complete $\mE E$-comodules, as studied previously by Devinatz~\cite{mordevinatz}. However, whilst the category of $L$-complete $E_*$-modules is abelian, the same is \emph{not} true for the category of complete $E_*$-modules, so we prefer to work with $L$-complete $\mE E$-comodules.
\end{rem}

Given an $L$-complete $R$-module $N$, let $\Gamma \btimes_R N$ be the comodule with structure map $\psi = \Gamma \btimes_R \Delta$. This is called an \emph{extended $L$-complete $\Gamma$-comodule}. The following is the standard adjunction between extended comodules and ordinary modules. 
\begin{lemma}\label{lem:adjoint}
    Let $N$ be an $L$-complete $R$-module and let $M$ be an $L$-complete $\Gamma$-comodule. Then there is an isomorphism
    \[
\Hom_{\operatorname{\widehat{Mod}}_R}(M,N) = \Hom_{\wComod_\Gamma}(M,\Gamma \btimes_R N). 
    \]
\end{lemma}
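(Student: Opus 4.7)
The plan is to mimic the classical extended-comodule adjunction, replacing the ordinary tensor product by the completed tensor product $\btimes_R$. The ambient symmetric monoidal structure on $\wComod_\Gamma$ provided by~\cite[Proposition A.6]{hs99} gives us functoriality of $\btimes_R$ (so the interchange law $(f_1\btimes f_2)\circ(h_1\btimes h_2)=(f_1h_1)\btimes(f_2h_2)$ holds), together with the identification $R\btimes_R N\cong N$ whenever $N$ is $L$-complete, both of which we shall use freely.

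First I would construct the two candidate maps. Given a comodule map $f:M\to\Gamma\btimes_R N$, define
\[
\Phi(f) \;=\; (\epsilon\btimes_R 1_N)\circ f \,:\, M \longrightarrow R\btimes_R N \cong N.
\]
Conversely, given an $R$-linear map $g:M\to N$, set
\[
\Psi(g) \;=\; (1_\Gamma\btimes_R g)\circ\psi_M \,:\, M \longrightarrow \Gamma\btimes_R M \longrightarrow \Gamma\btimes_R N,
\]
where $\psi_M$ is the comodule structure map of $M$. The first check is that $\Psi(g)$ is a morphism of $L$-complete $\Gamma$-comodules. The structure map on the extended comodule $\Gamma\btimes_R N$ is $\Delta\btimes_R 1_N$, and using functoriality of $\btimes_R$ and coassociativity of $\psi_M$ (in the form $(1_\Gamma\btimes\psi_M)\circ\psi_M=(\Delta\btimes 1_M)\circ\psi_M$) both sides of the comodule axiom reduce to $(\Delta\btimes g)\circ\psi_M$.

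Next I would verify that $\Phi$ and $\Psi$ are mutually inverse. For $\Phi\circ\Psi(g)$, functoriality gives
\[
(\epsilon\btimes 1_N)\circ(1_\Gamma\btimes g)\circ\psi_M = g\circ(\epsilon\btimes 1_M)\circ\psi_M,
\]
and the counit axiom for $M$ identifies the right-hand factor with $1_M$. For $\Psi\circ\Phi(f)$, the comodule condition on $f$ rewrites $(1_\Gamma\btimes f)\circ\psi_M$ as $(\Delta\btimes 1_N)\circ f$, after which the Hopf algebroid counit relation $(1_\Gamma\btimes\epsilon)\circ\Delta=1_\Gamma$ collapses the composite back to $f$.

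The only non-formal obstacle is making sure each of these manipulations is legal inside $\widehat{\operatorname{Mod}}_R$ rather than merely in $\operatorname{Mod}_R$; in particular that the associator, the unitor $R\btimes_R N\cong N$ for $L$-complete $N$, and the interchange law for $\btimes_R$ are genuine isomorphisms of $L$-complete modules. All of this is built into the symmetric monoidal structure of~\cite[Proposition A.6]{hs99}, so once that framework is invoked the argument is entirely diagrammatic and parallels the classical proof for Hopf algebroids in~\cite[Appendix A]{ravgreen}. I do not anticipate any genuine difficulty beyond bookkeeping.
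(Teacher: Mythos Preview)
Your argument is correct and is precisely the classical extended-comodule adjunction, transported to the $L$-complete setting via the symmetric monoidal structure on $\Mod_R$. The paper itself states this lemma without proof, treating it as the evident analogue of the standard result for ordinary Hopf algebroids (cf.~\cite[Appendix A]{ravgreen}); your write-up simply fills in the details the authors left implicit, and there is nothing to compare.
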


Suppose that $F$ is a ring spectrum (in the stable homotopy category) such that $F_*F$ is a flat $F_*$-module. In this case the pair $(F_*,F_*F)$ is an (ordinary) Hopf algebroid. To show that $F_*(X)$ is an $F_*F$-comodule for any spectrum $X$ requires knowing that $F_*(F \wedge X) \simeq F_*F \otimes_{F_*} F_*X$. The same is true here; to show that $\mE X$ is an $L$-complete $\mE E$-comodule we need to show that $\mE (E \wedge X) \simeq \mE E \btimes_{E_*} \mE X$. We do not know if it is true in general; our next goal will be to give the examples of $L$-complete $\mE E$-comodules that we need. We first start with a preliminary lemma.
\begin{lemma}\label{lem:tameness}
Let $M$ and $N$ be $E_*$-modules such that $M$ is flat and $N$ is either a finitely-generated $E_*$-module, pro-free, or has bounded $\frak{m}$-torsion. Then $M \otimes_{E_*} N$ is tame. 
\end{lemma}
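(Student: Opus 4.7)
The plan is to argue case by case on the three hypotheses on $N$, with two of the cases being essentially immediate and the third requiring an induction. The pro-free case will follow because pro-free modules are flat (they are $L_0$-completions of free modules, which remain flat by Remark 1.5(i)), so $M \otimes_{E_*} N$ is a tensor product of flat modules and hence flat, and Remark 1.5(i) again yields tameness. The bounded $\fm$-torsion case is even shorter: if $\fm^k N = 0$ then $\fm^k(M \otimes_{E_*} N) = 0$, yielding tameness directly from Remark 1.5(iii).

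The substantive case is when $N$ is finitely generated. Since $E_*$ is a regular local Noetherian ring of Krull dimension $n$, every finitely generated $E_*$-module admits a finite free resolution by finitely generated free modules. I would argue by induction on the projective dimension of $N$, and in fact prove the slightly stronger statement that
\[
L_s(M \otimes_{E_*} N) = 0 \text{ for all } s > 0, \qquad L_0(M \otimes_{E_*} N) \cong L_0 M \otimes_{E_*} N.
\]
The base case $N$ finitely generated free is trivial since $L_0$ commutes with finite direct sums and $L_s$ vanishes on flat modules for $s>0$. For the inductive step, I would pick a presentation $0 \to Z \to F_0 \to N \to 0$ with $F_0$ finitely generated free and $\mathrm{pd}(Z) < \mathrm{pd}(N)$, tensor with the flat module $M$, and feed the resulting short exact sequence into the long exact sequence of derived completions. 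The inductive hypothesis applied to $Z$, combined with the flatness of $M \otimes_{E_*} F_0$, immediately gives $L_s(M \otimes_{E_*} N) = 0$ for $s \geq 2$; for $s = 1$ one is reduced to showing injectivity of $L_0 M \otimes_{E_*} Z \to L_0 M \otimes_{E_*} F_0$, which holds because $L_0 M$ is flat by Remark 1.5(i) and flat modules preserve injections. The identification of $L_0(M \otimes_{E_*} N)$ then drops out of the right-exact tail of the same long exact sequence.

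The main subtlety is the necessity of tracking both halves of the strengthened claim (vanishing of higher $L_s$ and identification of $L_0$) simultaneously through the induction: without the explicit identification of $L_0(M \otimes_{E_*} Z)$ with $L_0 M \otimes_{E_*} Z$, there is no way to interpret the final injectivity check as tensoring $Z \hookrightarrow F_0$ with the flat module $L_0 M$, and the inductive step would not close. Beyond this bookkeeping the argument is entirely formal, requiring no spectral sequence input beyond the standard long exact sequence of derived functors.
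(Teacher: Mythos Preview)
Your proof is correct, and for the pro-free and bounded $\fm$-torsion cases it is verbatim what the paper does. In the finitely-generated case the underlying mechanism is also the same as the paper's---tensor a short exact presentation with $M$, apply the long exact sequence in $L_s$, and use flatness of $L_0M$ to kill the $L_1$ term---but the induction is organized differently. You induct on the projective dimension of $N$, which is available because $E_*$ is regular, and you carry the identification $L_0(M\otimes_{E_*}N)\cong L_0M\otimes_{E_*}N$ through the induction as part of the hypothesis. The paper instead proves $L_1(M\otimes_{E_*}N)=0$ for \emph{all} finitely-generated $N$ at once (citing \cite[Proposition~A.4]{hs99} for the $L_0$ identification rather than proving it), and then bootstraps on the index $s$: since the kernel $K$ in any presentation is again finitely generated, the isomorphism $L_{s+1}(M\otimes_{E_*}N)\cong L_s(M\otimes_{E_*}K)$ from the long exact sequence pushes the vanishing up one degree at a time. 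Your version is more self-contained but leans on regularity of $E_*$; the paper's version works for any Noetherian local ring and outsources the $L_0$ computation to the literature.
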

\begin{proof}
  First assume $N$ is finitely-generated. Since $E_*$ is Noetherian there is a short exact sequence
  \[
0 \to K \to F \to N \to 0
  \]
  where $F = \oplus_I E_*$ is free and $K$ and $F$ are finitely-generated. Tensoring with the flat module $M$ gives another short exact sequence, and by~\cite[Theorem A.2]{hs99} there is a long exact sequence
  \begin{equation}\label{eq:les}
\cdots \to L_{k+1}(M \otimes_{E_*} N) \to L_k(M \otimes_{E_*}K) \to L_k(M \otimes_{E_*} F) \to L_k(M \otimes_{E_*} N) \to \cdots .\end{equation}
The functors $L_k$ are additive for all $k \ge 0$, and since $M$ is flat we see that $L_0(M \otimes_{E_*} F) = \oplus_I M^\wedge_\frak{m}$ and $L_k(M \otimes_{E_*}F) = 0$ for $k>0$. It follows that $L_{k+1}(M \otimes_{E_*}N) \simeq L_k(M \otimes_{E_*} K)$ for $k \ge 1$. 

Since $K,F$ and $N$ are all finitely-generated $E_*$-modules we use~\cite[Theorem A.4]{hs99} to see that the end of the long exact sequence~\eqref{eq:les} takes the form
 \[
0 \to L_1(M \otimes_{E_*}N) \to L_0(M) \otimes_{E_*} K \to L_0(M) \otimes_{E_*} F \to L_0(M) \otimes_{E_*} N \to 0.  
 \]
Since $M$ is flat, $L_0(M)$ is pro-free, and hence flat~\cite[Proposition A.15]{barthel2013completed}, so $L_0(M) \otimes_{E_*} K \to L_0(M) \otimes_{E_*} F$ is injective, forcing $L_1(M \otimes_{E_*}N) = 0$. Since $N$ was an arbitrary finitely-generated $E_*$-module and $K$ is finitely generated, we see that $L_1(M \otimes_{E_*} K) = 0$, also. It follows that $L_2(M \otimes_{E_*} N) \simeq L_1(M \otimes_{E_*} K) = 0$, and arguing inductively we see that $L_k(M \otimes_{E_*} N) = 0$ for $k>0$, so that $M \otimes_{E_*} N$ is tame. 

Now assume that $N$ is pro-free, and hence flat. It follows that $M \otimes_{E_*} N$ is also flat, and hence tame. 

For the final case, where $N$ has bounded $\frak{m}$-torsion, note that $M \otimes_{E_*} N$ also has bounded $\frak{m}$-torsion, and so is tame (see~\Cref{rem:tamemodules}). \qedhere

\end{proof}
We now identify conditions on a spectrum $X$ so that $\mE X$ is an $L$-complete $\mE E$-comodule. 
\begin{prop}\label{lem:comodules}
Let $X$ be a spectrum. If $\mE E \otimes_{E_*} \mE X$ is tame, then 
\begin{equation}\label{eq:kunneth}
\mE(E \wedge X) \simeq \mE E \btimes_{E_*} \mE X
\end{equation}
and $\mE X$ is an $L$-complete $\mE E$-comodule. In particular this occurs when $\mE X$ is either a finitely-generated $E_*$-module, pro-free or has bounded $\frak{m}$-torsion.
\end{prop}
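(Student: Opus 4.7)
The content of the proposition is really the Künneth-type isomorphism \eqref{eq:kunneth}; once it is established, the $L$-complete $\mE E$-comodule structure on $\mE X$ is induced formally by applying the functor $\mE(-)$ to the unit $\eta\colon\S\to E$ and the $E_\infty$-multiplication $\mu\colon E\wedge E\to E$, with the counit and coassociativity axioms following from the ring spectrum axioms for $E$. I therefore focus on proving \eqref{eq:kunneth}.

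My starting point is to feed the spectrum $E\wedge X$ into the Milnor sequence~\eqref{eq:milnor}:
\[
0 \to \varprojlim_I{}^1 E_{*+1}(E\wedge X\wedge M_I) \to \mE(E\wedge X) \to \varprojlim_I E_*(E\wedge X\wedge M_I) \to 0.
\]
The key step is then to identify each term on the right with $(\mE E\otimes_{E_*}\mE X)/I$. For this, I would use that each generalized Moore spectrum $M_I$ is built from $\S$ by a finite Koszul-style tower of cofibres along multiplication by the regular sequence generating $I$; smashing this tower with $E\wedge E\wedge X$ and regarding the result as an $E$-module via the outer $E$-factor reduces the computation to the obvious quotient together with higher Koszul $\Tor$ terms, and the tameness hypothesis is precisely what forces those $\Tor$ contributions to vanish. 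Invoking cofinality of the indexing set $\{I\}$ with the tower of powers $\{\frak{m}^k\}$, this rewrites the Milnor sequence as
\[
0 \to \varprojlim_k{}^1 (\mE E\otimes_{E_*}\mE X)/\frak{m}^k \to \mE(E\wedge X) \to \varprojlim_k (\mE E\otimes_{E_*}\mE X)/\frak{m}^k \to 0.
\]
Tameness of $\mE E\otimes_{E_*}\mE X$ now gives $L_1(\mE E\otimes_{E_*}\mE X)=0$, killing the $\varprojlim^1$ term, while the surviving limit computes $L_0(\mE E\otimes_{E_*}\mE X)=\mE E\btimes_{E_*}\mE X$ by definition of the completed tensor product. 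Substituting yields the desired isomorphism.

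For the final clause (the three explicit classes of $\mE X$), I would simply invoke~\Cref{lem:tameness} with $M=\mE E$ (which is pro-free, hence flat by~\Cref{rem:tamemodules}(i)) and $N=\mE X$: the hypothesis of that lemma is exactly satisfied in each of the three listed cases, so $\mE E\otimes_{E_*}\mE X$ is automatically tame and the general statement applies. The main technical obstacle is the Künneth identification $E_*(E\wedge X\wedge M_I)\cong(\mE E\otimes_{E_*}\mE X)/I$: one must carefully check that the possible non-flatness of $E_*E$ over $E_*$ does not spoil the computation when unwinding the finite Koszul tower defining $M_I$, and that tameness of $\mE E\otimes_{E_*}\mE X$ really is strong enough to make the higher $\Tor$ terms vanish (or at least become negligible in the limit over $I$). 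Everything else is bookkeeping with completions and the formal comodule structure.
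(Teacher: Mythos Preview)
Your Milnor-sequence approach has a genuine gap at the step you yourself flag as the main obstacle. Unwinding $E_*(E\wedge X\wedge M_I)$ via the Koszul tower for $M_I$ produces Tor terms of the form $\Tor^{E_*}_*(E_*(E\wedge X),E_*/I)$, and by the ordinary flat K\"unneth isomorphism $E_*(E\wedge X)\cong E_*E\otimes_{E_*}E_*X$; note that $E_*E$ \emph{is} flat over $E_*$, so that is not the issue. The issue is that these are the \emph{uncompleted} homologies, whereas your hypothesis is tameness of $\mE E\otimes_{E_*}\mE X$, which says nothing about Tor groups of $E_*E\otimes_{E_*}E_*X$. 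One might try to rescue this by noting that $E\wedge E\wedge X\wedge M_I$ is already $K(n)$-local (since $M_I$ has type $\ge n$), hence equals $L_{K(n)}(E\wedge E\wedge X)\wedge M_I$; but $\pi_*L_{K(n)}(E\wedge E\wedge X)=\mE(E\wedge X)$ is exactly the quantity you are trying to compute, so this is circular. There is also a smaller confusion in the endgame: the tower $\{M/\mathfrak m^k\}$ has surjective transition maps, so its $\varprojlim^1$ vanishes for free (not by tameness), and the surviving $\varprojlim$ is the $\mathfrak m$-adic completion $M^\wedge_{\mathfrak m}$, which need not coincide with $L_0M$; the discrepancy is $\varprojlim^1_k\Tor_1(E_*/\mathfrak m^k,M)$ as in \eqref{eq:kernel}, and tameness ($L_1=0$) controls $\varprojlim^1\Tor_2$ and $\varprojlim\Tor_1$, not this term.

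The paper avoids all of this by staying inside the completed world throughout: it composes the EKMM K\"unneth spectral sequence $\Tor^{E_*}_{s,t}(\mE E,\mE X)\Rightarrow \pi_*\bigl(L_{K(n)}(E\wedge E)\wedge_E L_{K(n)}(E\wedge X)\bigr)$, which collapses because $\mE E$ is flat, with Hovey's spectral sequence $L_i(\pi_*M)\Rightarrow \pi_*L_{K(n)}M$ applied to that smash product. Tameness of $\mE E\otimes_{E_*}\mE X$ is then precisely what collapses the second spectral sequence and yields \eqref{eq:kunneth}. Your treatment of the final clause via \Cref{lem:tameness} is correct and matches the paper.
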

\begin{proof}
There is a spectral sequence~\cite[Theorem IV.4.1]{EKMM}
\begin{equation}\label{eq:ekmm}
E^2_{s,t} = \Tor^{E_*}_{s,t}(\mE E,\mE X) \Rightarrow \pi_{s+t}(L_{K(n)}(E \wedge E)\wedge_E L_{K(n)}(E \wedge X)).
\end{equation}
For any $E$-module $M$ we also have the spectral sequence of $E_*$-modules~\cite[Theorem 2.3]{hovey08fil}
\[
E^2_{s,t} = (L_s\pi_*M)_t \Rightarrow \pi_{s+t} L_{K(n)}M.
\]
In particular there is a spectral sequence starting from the abutment of~\ref{eq:ekmm} that has the form
\[
(L_i\pi_{\ast}(L_{K(n)}(E \wedge E)\wedge_E L_{K(n)}(E \wedge X)))_{s+t} \Rightarrow \pi_{i+s+t}L_{K(n)}(L_{K(n)}(E \wedge E)\wedge_E L_{K(n)}(E \wedge X)).
\]
By~\Cref{rem:bousfieldlocalisation} we deduce that there is an equivalence
\[
  L_{K(n)}(L_{K(n)}(E \wedge E)\wedge_E L_{K(n)}(E \wedge X)) \simeq L_{K(n)}(E \wedge E \wedge X),
\]
and so the latter spectral sequence abuts to $\mE (E \wedge X)$. 
\color{black}
Since $\mE E$ is a flat $E_*$-module the first spectral sequence always collapses, and the second spectral sequence becomes
\begin{equation}\label{eq:ss}
(L_i(\mE E \otimes_{E_*} \mE X))_{s+t} \Rightarrow E^\vee_{i+s+t} (E \wedge X).
\end{equation}
Thus, if $\mE E \otimes_{E_*} \mE X$ is tame, this gives an isomorphism
\[
\mE(E \wedge X) \simeq \mE E \btimes_{E_*} \mE X,
\]
and so $\mE X$ is an $L$-complete $\mE E$-comodule. Since $\mE E$ is pro-free it is flat, and~\Cref{lem:tameness} applies to show that $\mE E \otimes_{E_*} \mE X$ is tame in the given cases. 
\end{proof}
\begin{rem}
This raises the question: what is the most general class of $L$-complete comodules $M$ such that $\mE E \otimes_{E_*} M$ is tame? In light of Baker's example~\cite[Appendix B]{bakerlcomplete} of an $L$-complete - and hence tame - module $N$ such that $L_1(\bigoplus_{i=0}^{\infty}N) \ne 0$, this seems to be a subtle problem. In particular, we note that this example implies that the collection of tame modules itself need not satisfy the above condition. 
\end{rem}
  The following corollary shows that the equivalence of~\eqref{eq:kunneth} can be iterated. 
    
\begin{cor}\label{lem:iteratedkunneth}
  Let $Y$ be a spectrum such that $\mE Y$ is either a finitely-generated $E_*$-module, pro-free or has bounded $\frak{m}$-torsion.  Then for all $s \ge 0$ there is an isomorphism
  \[
\mE (E^{\wedge s} \wedge Y ) \simeq (\mE E)^{\btimes s} \btimes_{E_*} \mE Y.
  \]
\end{cor}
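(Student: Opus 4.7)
The plan is to induct on $s$, with the $s = 0$ case being the trivial identity $\mE Y \simeq \mE Y$ and $s = 1$ being precisely \Cref{lem:comodules}. For the inductive step, assume the isomorphism $\mE(E^{\wedge s} \wedge Y) \simeq (\mE E)^{\btimes s} \btimes_{E_*} \mE Y$ and set $X \coloneqq E^{\wedge s} \wedge Y$, so that $E^{\wedge(s+1)} \wedge Y = E \wedge X$. Applying \Cref{lem:comodules} to $X$ yields $\mE(E \wedge X) \simeq \mE E \btimes_{E_*} \mE X$, and combining this with the inductive hypothesis together with associativity of $\btimes_{E_*}$ gives the desired identification. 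The real content of the inductive step is therefore to verify the tameness of $\mE E \otimes_{E_*} \mE X$.

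In the pro-free case, one uses that the completed tensor product of two pro-free modules is again pro-free: this propagates pro-freeness of $\mE Y$ through the induction, so $\mE X$ is pro-free, and hence $\mE E \otimes_{E_*} \mE X$ is flat and tame. In the bounded $\fm$-torsion case, if $\fm^k$ annihilates $\mE Y$ then the same power of $\fm$ annihilates $(\mE E)^{\otimes s} \otimes_{E_*} \mE Y$ and hence its $L_0$ (since bounded torsion modules are already $L$-complete); so $\mE X$ has bounded $\fm$-torsion, whence $\mE E \otimes_{E_*} \mE X$ does too and is tame by \Cref{rem:tamemodules}.

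The most delicate case is the finitely-generated one, since the property of being finitely-generated over $E_*$ is not preserved by $\btimes_{E_*}$ with the typically infinitely-generated pro-free module $\mE E$. The main obstacle is thus to organize the induction so that tameness can still be verified. The idea is to work with the ordinary tensor product instead: $(\mE E)^{\otimes (s+1)} \otimes_{E_*} \mE Y$ is the tensor product of a flat $E_*$-module with a finitely-generated one, so \Cref{lem:tameness} directly yields its tameness. Iterating the spectral sequence~\eqref{eq:ss} and invoking tameness at each step then identifies $\mE(E^{\wedge (s+1)} \wedge Y)$ with $L_0\bigl((\mE E)^{\otimes (s+1)} \otimes_{E_*} \mE Y\bigr) \simeq (\mE E)^{\btimes(s+1)} \btimes_{E_*} \mE Y$, closing the induction.
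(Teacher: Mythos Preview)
Your pro-free and bounded $\fm$-torsion cases are correct and match the paper's argument exactly.

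In the finitely-generated case, however, there is a gap. The spectral sequence~\eqref{eq:ss} at stage $k$ has input $\mE E \otimes_{E_*} \mE X$ with $\mE X = \mE(E^{\wedge k}\wedge Y) \simeq (\mE E)^{\btimes k}\btimes_{E_*}\mE Y$; iterating it therefore produces
\[
L_0\bigl(\mE E \otimes_{E_*} L_0(\mE E \otimes_{E_*} \cdots L_0(\mE E \otimes_{E_*} \mE Y)\cdots)\bigr),
\]
not $L_0\bigl((\mE E)^{\otimes(s+1)}\otimes_{E_*}\mE Y\bigr)$. Knowing that $(\mE E)^{\otimes(s+1)}\otimes_{E_*}\mE Y$ is tame does not, by itself, tell you that $\mE E \otimes_{E_*}\bigl((\mE E)^{\btimes s}\btimes_{E_*}\mE Y\bigr)$ is tame, which is what the inductive step actually requires.

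The paper closes this gap with a single extra input you omit: by~\cite[Proposition~A.4]{hs99}, for $N$ finitely generated one has $L_0(M\otimes_{E_*} N)\simeq L_0(M)\otimes_{E_*} N$, so
\[
(\mE E)^{\btimes s}\btimes_{E_*}\mE Y \;\simeq\; (\mE E)^{\btimes s}\otimes_{E_*}\mE Y.
\]
Now $\mE E \otimes_{E_*}(\mE E)^{\btimes s}$ is flat and $\mE Y$ is finitely generated, so \Cref{lem:tameness} applies directly to $\mE E \otimes_{E_*}\mE X$. Once you insert this observation, your argument and the paper's coincide.
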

\begin{proof}
We will prove this by induction on $s$, the case $s=0$ being trivial. Assume now that $\mE (E^{\wedge (s-1)} \wedge Y) \simeq (\mE E)^{\btimes (s-1)} \btimes_{E_*} \mE Y $; we will show that $\mE E \otimes_{E_*} ((\mE E)^{\btimes (s-1)} \btimes_{E_*} \mE Y)$ is tame. We claim that this is true in the three cases we consider. 
\begin{enumerate}
  \item If $\mE Y$ is flat, then so is $(\mE E)^{\btimes(s-1)}\btimes_{E_*}\mE Y$, and we can apply~\Cref{lem:tameness} to see that $\mE E \otimes_{E_*} ((\mE E)^{\btimes (s-1)} \btimes_{E_*} \mE Y)$ is tame. 
  \item If $\mE Y$ is finitely-generated then $(\mE E)^{\btimes(s-1)}\btimes_{E_*}\mE Y \simeq (\mE E)^{\btimes(s-1)}\otimes_{E_*}\mE Y$~\cite[Theorem A.4]{hs99}. Since $\mE E \otimes_{E_*} (\mE E)^{\btimes(s-1)}$ is a flat $E_*$-module, once again we can apply~\Cref{lem:tameness} to see that $\mE E \otimes_{E_*} ((\mE E)^{\btimes (s-1)} \btimes_{E_*} \mE Y)$ is tame.
  \item If $\mE Y$ has bounded $\frak{m}$-torsion, then the same is true for $\mE E \otimes_{E_*} ((\mE E)^{\btimes (s-1)} \btimes_{E_*} \mE Y)$, and it follows that it is tame, as required.
\end{enumerate}
Therefore, \Cref{lem:comodules} applied to $X =E^{\wedge (s-1)} \wedge Y$  implies that
\[
\mE(E^{\wedge s} \wedge Y) \simeq \mE E \btimes_{E_*} \mE(E^{\wedge (s-1)} \wedge Y) \simeq (\mE E)^{\btimes s} \btimes_{E_*} \mE Y,
\]
where the last isomorphism uses the inductive hypothesis once more. 
\end{proof}

\section{Relative homological algebra}

\subsection{Motivation}
Recall~\cite[Appendix A]{ravgreen} that the category of comodules over a Hopf algebroid $(A,\Gamma)$ is abelian whenever $\Gamma$ is flat over $A$, and that if $I$ is an injective $A$-module then $\Gamma \otimes_A I$ is an injective $\Gamma$-comodule. This implies that the category of $\Gamma$-comodules has enough injectives. 

Given $\Gamma$-comodules $M$ and $N$ we can define $\Ext^i_\Gamma(M,N)$ in the usual way as the $i$-th derived functor of $\Hom_\Gamma(M,N)$, functorial in $N$. However, the category of $L$-complete $\Gamma$-comodules does not need to be abelian. In this case, in order to define $L$-complete $\Ext$-groups, we need to use relative homological algebra, for which the following is meant to provide some motivation.

The following two lemmas show that we can form a resolution by~\emph{relative injective} objects, instead of absolute injectives. 
\begin{lemma}\label{lem:relres}
	Let $(A,\Gamma)$ be a Hopf algebroid (over a commutative ring $K$) such that $\Gamma$ is a flat $A$-module, and let
\[
	0 \to N \to R^0 \to R^1 \to \cdots
	\]
	be a sequence of left $\Gamma$-comodules which is exact (over $K$) and such that for each $i$, $\Ext^n_\Gamma(M,R^i) = 0$ for all $n>0$. Then $\Ext_\Gamma(M,N)$ is the cohomology of the complex
	\[
\Ext_\Gamma^0(M,R^0) \to \Ext_\Gamma^0(M,R^1) \to \cdots.
	\]
\end{lemma}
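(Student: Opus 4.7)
The plan is a standard dimension-shifting argument. Since $\Gamma$ is flat over $A$, the category of left $\Gamma$-comodules is abelian and has enough injectives, so $\Ext^n_\Gamma(M,-)$ is a genuine derived functor carrying short exact sequences to long exact sequences. Before applying this, I need exactness in the comodule category, not merely over $K$; but flatness of $\Gamma$ means exactness of a sequence of $\Gamma$-comodules is detected on underlying $A$-modules, and the assumption of exactness over $K$ in the situations where this lemma will be applied will feed into exactness over $A$. This mild verification is the only slightly delicate point, and is really the main obstacle.

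Once this is in place, break the long resolution into short exact sequences of $\Gamma$-comodules
\begin{equation*}
0 \to Z^i \to R^i \to Z^{i+1} \to 0,
\end{equation*}
where $Z^0 = N$ and $Z^{i+1} = \mathrm{im}(R^i \to R^{i+1}) = \ker(R^{i+1} \to R^{i+2})$. Apply $\Hom_\Gamma(M,-)$ to each to obtain long exact sequences
\begin{equation*}
\cdots \to \Ext^n_\Gamma(M,Z^i) \to \Ext^n_\Gamma(M,R^i) \to \Ext^n_\Gamma(M,Z^{i+1}) \to \Ext^{n+1}_\Gamma(M,Z^i) \to \cdots.
\end{equation*}
The hypothesis $\Ext^n_\Gamma(M,R^i) = 0$ for $n > 0$ yields dimension-shifting isomorphisms $\Ext^n_\Gamma(M,Z^{i+1}) \cong \Ext^{n+1}_\Gamma(M,Z^i)$ for $n \geq 1$, which iterate to
\begin{equation*}
\Ext^n_\Gamma(M,N) \;\cong\; \Ext^1_\Gamma(M, Z^{n-1}) \qquad (n \geq 1).
\end{equation*}

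The last step is to identify these groups with $H^n(\Hom_\Gamma(M,R^\bullet))$. From the sequence $0 \to Z^{n-1} \to R^{n-1} \to Z^n \to 0$ and $\Ext^1_\Gamma(M,R^{n-1}) = 0$, the connecting map gives $\Ext^1_\Gamma(M,Z^{n-1}) \cong \mathrm{coker}\bigl(\Hom_\Gamma(M,R^{n-1}) \to \Hom_\Gamma(M,Z^n)\bigr)$. Meanwhile, left-exactness of $\Hom_\Gamma(M,-)$ applied to $0 \to Z^n \to R^n \to R^{n+1}$ identifies $\Hom_\Gamma(M,Z^n)$ with $\ker\bigl(\Hom_\Gamma(M,R^n) \to \Hom_\Gamma(M,R^{n+1})\bigr)$. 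Combining these two identifications realizes $\Ext^1_\Gamma(M,Z^{n-1})$ exactly as $H^n$ of the complex $\Hom_\Gamma(M,R^\bullet)$. The case $n = 0$ is handled directly by left-exactness of $\Hom_\Gamma(M,-)$ applied to $0 \to N \to R^0 \to R^1$, completing the proof.
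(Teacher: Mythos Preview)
Your argument is correct and is exactly the standard dimension-shifting proof; the paper itself does not give an argument but simply cites \cite[Lemma~1.1]{mrw77} and \cite[Lemma~A1.2.4]{ravgreen}, where this same proof appears. One small remark: your worry about ``exact over $K$'' versus exact in the comodule category is harmless here, since exactness of a sequence of $A$-linear maps is detected on underlying abelian groups (hence over $K$), and flatness of $\Gamma$ over $A$ guarantees that kernels and cokernels in $\Gamma$-comodules coincide with those in $A$-modules---so the passage is automatic rather than delicate.
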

\begin{proof}
	See~\cite[Lemma 1.1]{mrw77} or~\cite[Lemma A1.2.4]{ravgreen}.
\end{proof}

\begin{definition}
	A $\Gamma$-comodule $S$ is a relative injective  $\Gamma$-comodule if it is a direct summand of an extended comodule, i.e., one of the form $\Gamma \otimes_A N$.
\end{definition}

\begin{lemma}\label{lem:proj}
	Let $S$ be a relatively injective comodule. If $M$ is a projective $A$-module, then $\Ext_\Gamma^i(M,S) = 0$ for $i>0$. Hence if $I^*$ is a resolution of $N$ by relatively injective comodules then 
    \begin{equation}\label{eq:extequiv}
\Ext^n_\Gamma(M,N) = H^n(\Hom_\Gamma(M,I^*))
    \end{equation}
    for all $n \ge 0$. 
\end{lemma}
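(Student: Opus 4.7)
The plan is to reduce immediately to the case of an extended comodule and then exploit the standard adjunction between $\Gamma$-comodules and $A$-modules. Since $\Ext^i_\Gamma(M,-)$ is additive, if $S$ is a direct summand of $\Gamma \otimes_A N$ then $\Ext^i_\Gamma(M,S)$ is a direct summand of $\Ext^i_\Gamma(M,\Gamma \otimes_A N)$, so it suffices to prove the vanishing when $S = \Gamma \otimes_A N$ itself.

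To compute $\Ext^*_\Gamma(M, \Gamma \otimes_A N)$, I would take an injective resolution $0 \to N \to I^0 \to I^1 \to \cdots$ of $N$ in the category of $A$-modules. Because $\Gamma$ is flat over $A$, tensoring with $\Gamma$ preserves exactness, so
\[
0 \to \Gamma \otimes_A N \to \Gamma \otimes_A I^0 \to \Gamma \otimes_A I^1 \to \cdots
\]
is an exact complex of $\Gamma$-comodules. Moreover, by the fact recalled at the beginning of the motivation subsection, each $\Gamma \otimes_A I^j$ is an injective $\Gamma$-comodule. Hence $\Gamma \otimes_A I^\bullet$ is an honest injective resolution of $\Gamma \otimes_A N$, and $\Ext^i_\Gamma(M, \Gamma \otimes_A N)$ is the cohomology of $\Hom_\Gamma(M, \Gamma \otimes_A I^\bullet)$.

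Now I would apply the extended/forgetful adjunction $\Hom_\Gamma(M, \Gamma \otimes_A L) \cong \Hom_A(M, L)$ (the comodule analogue of \Cref{lem:adjoint}) levelwise, to identify this with the complex $\Hom_A(M, I^\bullet)$. Since $M$ is projective as an $A$-module, $\Hom_A(M,-)$ is exact, so this complex has cohomology $\Ext^i_A(M,N)$, which vanishes for $i>0$ and equals $\Hom_A(M,N)$ for $i=0$. This proves the first claim. For the second claim, the vanishing $\Ext^n_\Gamma(M,I^j)=0$ for $n>0$ is exactly the hypothesis of \Cref{lem:relres}, so~\eqref{eq:extequiv} follows immediately by invoking that lemma with $R^i = I^i$.

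The only real point that requires care is the adjunction at the level of cochain complexes; this is routine but must be checked to be natural in the resolution so that passing to cohomology is unambiguous. Beyond that, there is no serious obstacle—this is the standard change-of-rings argument adapted to the Hopf-algebroid setting, and everything reduces to flatness of $\Gamma$, the fact that $\Gamma \otimes_A I$ is injective for $I$ injective, and projectivity of $M$ over $A$.
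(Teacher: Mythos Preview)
Your proof is correct and follows essentially the same route as the paper: the paper derives the second statement from the first via \Cref{lem:relres}, exactly as you do, and for the first statement simply points to \cite[A1.2.8(b)]{ravgreen}, whose argument is precisely the one you have written out (reduce to an extended comodule, push an $A$-injective resolution through $\Gamma \otimes_A -$, and apply the extended/forgetful adjunction together with projectivity of $M$). Nothing is missing.
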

\begin{proof}
The second statement follows from the first and~\Cref{lem:relres}. For the first statement proceed as in~\cite[A1.2.8(b)]{ravgreen}.
\end{proof}

In the case of $L$-complete $\Gamma$-comodules, we will take the analogue of~\Cref{eq:extequiv} as a definition of $\wExt_{\Gamma}(-,-)$ (see~\Cref{def:lcompleteext}).

\begin{rem}
    The reader may wonder about projective objects. In general, comodules over a Hopf algebra do not have enough projectives. For example, when $(A,\Gamma) = (\F_p,\mathcal{A})$, where $\mathcal{A}$ is the dual of the Steenrod algebra, it is believed that there are no nonzero projective objects~\cite{stablepalmeri}. 
\end{rem}
\subsection{Homological algebra for $L$-complete comodules}
The category $\wComod_\Gamma$ of $L$-complete $\Gamma$-comodules is not abelian; it is an additive category with cokernels. The absence of kernels is due to the failure of tensoring with $\Gamma$ to be flat. If $\theta: M \to N$ is a morphism of $L$-complete comodules, then there is a commutative diagram~\cite{bakerlcomplete}
\[
\begin{tikzcd}
     0 \arrow{r} & \ker \theta \arrow[dashed]{d} \arrow{r} & M \arrow{r}{\theta} \arrow{d}{\psi_M} & N \arrow{d}{\psi_N} \\
     & \Gamma \btimes_R \ker \theta \arrow{r} & \Gamma \btimes_R M \arrow{r}{\text{id} \btimes_R \theta} & \Gamma \btimes_R N,
 \end{tikzcd} 
\]
but the dashed arrow need not exist or be unique. 

Since $\wComod_\Gamma$ is not abelian we need to use the methods of relative homological algebra to define a suitable Ext functor, which we briefly review now. For a more thorough exposition see~\cite{eilenbergmoore} (although in general one needs to dualise what they say, since they mainly work with relative projective objects). Our work is in fact similar to that of Miller and Ravenel~\cite{mrw77}.

\begin{definition}\label{def:injclass}
An injective class $\mathcal{I}$ in a category $\mathcal{C}$ is a pair $(\mathcal{D},\mathcal{S})$ where $\mathcal{D}$ is a class of objects and $\mathcal{S}$ is a class of morphisms such that:
\begin{enumerate}
\item $I$ is in $\mathcal{D}$ if and only if for each $f:A \to B$ in $\mathcal{S}$
\[
f^\ast:\Hom_{\mathcal{C}}(B,I) \to \Hom_{\mathcal{C}}(A,I) 
\]
is an epimorphism. We call such objects relative injectives. 
\item A morphism $f:A \to B$ is in $\mathcal{S}$ if and only if for each $I \in \mathcal{D}$  
\[
f^\ast:\Hom_{\mathcal{C}}(B,I) \to \Hom_{\mathcal{C}}(A,I) 
\]
is an epimorphism. These are called the relative monomorphisms. 
\item\label{item:relinj} For any object $A \in \mathcal{C}$ there exists an object $Q \in \mathcal{D}$ and a morphism $f:A \to Q$ in $\mathcal{S}$. 
\end{enumerate}
\end{definition}
\begin{rem}
	Note that given either $\mathcal{S}$ or $\mathcal{D}$, the other class is determined by the requirements above, and that the third condition ensures the existence of enough relative injectives. 
\end{rem}
It is not hard to check that $\mathcal{D}$ is closed under retracts and that if the composite morphism $A \xr{f}B \to C$ is in $\mathcal{S}$ then so is $f:A \to B$.

\begin{example}[The split injective class]
 The \emph{split injective class} $\mathcal{I}_s = (\mathcal{D}_s,\mathcal{S}_s)$ has $\mathcal{D}_s$ equal to all objects of $\mathcal{C}$ and $\mathcal{S}_s$ all morphisms that satisfy~\Cref{def:injclass}, i.e., $\Hom_\mathcal{C}(f,-)$ is surjective for all objects. One can easily check that this is equivalent to the requirement that $f:A \to B$ is a split monomorphism. 

\end{example}

\begin{example}[The absolute injective class]
Let $\mathcal{S}$ be the class of all monomorphisms and then let $\mathcal{D}$ be the objects as needed. This satisfies (3) if there are enough categorical injectives. 
\end{example}

One way to construct an injective class is via a method known as reflection of adjoint functors. 
\begin{prop}
     Suppose that $\mathcal{C}$ and $\mathcal{F}$ are additive categories with cokernels, and there is a pair of adjoint functors
\[
T:\mathcal{C} \rightleftarrows \mathcal{F}:U.
\]
Then, if $(\mathcal{D},\mathcal{S})$ is an injective class in $\mathcal{C}$, we define an injective class $(\mathcal{D}',\mathcal{S}')$ in $\mathcal{F}$, where the class of objects is given by the set of all retracts of $T(\mathcal{D})$ and the class of morphisms is given by all morphisms whose image (under $U$) is in $\mathcal{S}$.
\end{prop}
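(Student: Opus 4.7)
The plan is to verify the three conditions of \Cref{def:injclass} for the proposed pair $(\mathcal{D}', \mathcal{S}')$ in $\mathcal{F}$. The key tool is the adjunction isomorphism; since $T(\mathcal{D})$ is to supply relative injectives in $\mathcal{F}$, the useful form is
\[
\Hom_\mathcal{F}(Y, TX) \cong \Hom_\mathcal{C}(UY, X),
\]
so I will interpret the adjunction as $U \dashv T$ (matching the intended application where $T = \Gamma \btimes_R -$ is right adjoint to the forgetful functor $U$). Under this isomorphism, precomposition with $f \colon A \to B$ on the left corresponds to precomposition with $Uf \colon UA \to UB$ on the right.

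Given this translation, the forward direction of condition (1) is immediate for objects of the form $TI$ with $I \in \mathcal{D}$: for any $f \in \mathcal{S}'$ the map $(Uf)^*$ is surjective on $\Hom_\mathcal{C}(U-, I)$ because $Uf \in \mathcal{S}$ and $I \in \mathcal{D}$, and the adjunction transports this to surjectivity of $f^*$ on $\Hom_\mathcal{F}(-, TI)$. A standard retract argument extends the conclusion from $T(\mathcal{D})$ to all of $\mathcal{D}'$, so the forward direction of (1) holds; the forward direction of (2) is then immediate from the definitions. For condition (3), given $A \in \mathcal{F}$ I would apply the corresponding condition in $\mathcal{C}$ to $UA$ to obtain $g \colon UA \to I$ with $I \in \mathcal{D}$ and $g \in \mathcal{S}$, then take its adjunct $\tilde{g} \colon A \to TI$. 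A short unit/counit computation gives $\epsilon_I \circ U\tilde{g} = g$, and the left-factor closure of $\mathcal{S}$ recorded just after \Cref{def:injclass} then forces $U\tilde{g} \in \mathcal{S}$, i.e.\ $\tilde{g} \in \mathcal{S}'$.

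The remaining (reverse) directions of (1) and (2) are now standard injective-class formalities. For (1) $\Leftarrow$, if $I \in \mathcal{F}$ is relative injective with respect to $\mathcal{S}'$, the construction in the previous paragraph supplies some $\tilde{g} \colon I \to TI'$ in $\mathcal{S}'$, and lifting the identity of $I$ across $\tilde{g}$ exhibits $I$ as a retract of $TI' \in T(\mathcal{D})$, hence $I \in \mathcal{D}'$. For (2) $\Leftarrow$, surjectivity of $f^*$ against each $TI' \in \mathcal{D}'$ transports back through the adjunction to surjectivity of $(Uf)^*$ against all $I' \in \mathcal{D}$, giving $Uf \in \mathcal{S}$ by condition (2) for $(\mathcal{D}, \mathcal{S})$. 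The main obstacle is organisational rather than mathematical: keeping the adjunction direction and unit/counit bookkeeping consistent in the verification of condition (3), in particular the identification $g = \epsilon_I \circ U\tilde{g}$ and the appeal to left-factor closure of $\mathcal{S}$. Everything else reduces to a formal transfer through the adjunction.
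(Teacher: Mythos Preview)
Your proof is correct and follows essentially the same route as the paper's sketch: both use the adjunction isomorphism to transport surjectivity of $f^*$ against $T(I)$ to surjectivity of $(Uf)^*$ against $I$, and both verify condition (3) by taking the adjunct of a relative monomorphism $UA \to Q$ in $\mathcal{C}$ and invoking left-factor closure of $\mathcal{S}$ on the factorisation $UA \to UTQ \to Q$. You are simply more explicit than the paper about the reverse implications in (1) and (2), which the paper leaves to the reader, and about the adjunction direction $U \dashv T$ needed for the argument to run.
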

\begin{proof}[Sketch of proof.\footnotemark] \footnotetext{For full details see~\cite[p.~15]{eilenbergmoore} - here it is proved for relative projectives, but it is essentially formal to dualise the given argument.}
 First note that, since relative injectives are closed under retracts, to show that $\mathcal{D'}$ is as claimed, it suffices to show that $T(I)$ is relative injective, whenever $I \in \mathcal{D}$. Let $A \to B$ be in $\mathcal{S}'$ and $I \in \mathcal{D}$; then the map
\[
\Hom_\mathcal{F}(B,T(I)) \to \Hom_\mathcal{F}(A,T(I))
\]
is equivalent to the epimorphism
\[
\Hom_{\mathcal{C}}(U(B),I) \to \Hom_{\mathcal{C}}(U(A),I).
\]
A similar method shows that the relative monomorphisms are as claimed. Finally we observe that for all $A \in \mathcal{F}$ there exists a $Q \in \mathcal{D}$ such that $U(A) \to Q \in \mathcal{S}$. Then the adjoint $A \to T(Q)$ satisfies Condition 3. To see this note that $U(A) \to Q$ factors as $U(A) \to U(T(Q)) \to Q$; since relative monomorphisms are closed under left factorisation (see above) $U(A) \to U(T(Q)) \in \mathcal{S}$. Then $A \to T(Q) \in \mathcal{S}'$ as required.
 \end{proof}
 We recall the following definition.
\begin{definition}
An extended $L$-complete $\mE E$-comodule is a comodule isomorphic to one of the form $\mE E \btimes_{E_*} M$, where $M$ is an $L$-complete $E_*$-module. Here the comultiplication is given by the map
\[
\mE E \btimes_{E_*} M \xr{\Delta \btimes \text{id}} \mE E\btimes_{E_*} \mE E \btimes_{E_*} M.
\]
\end{definition}
\begin{example}\label{examp:Morava}
Give $\Mod_{E_*}$ the split injective class. Then the adjunction
\[
\Hom_{\operatorname{\widehat{Mod}_{E_*}}}(A,B) = \Hom_{\wComod_{\mE E}}(A,\mE E \btimes_{E_*} B)
\]
produces an injective class in $\wComod_{\mE E}$. In particular we have 
\begin{enumerate}
	\item $\mathcal{S}$ is the class of all comodule morphisms $f: A \to B$ whose underlying map of $L$-complete $E_*$-modules is a split monomorphism.
	\item  $\mathcal{D}$ is the class of $L$-complete $\mE E$-comodules which are retracts of extended complete $\mE E$-comodules. 
\end{enumerate}
 Note that for any complete $\mE E$-comodule $M$ the coaction map $M \xr{\psi} \mE E\htimes_{E_*} M$ is a relative monomorphism into a relative injective. 

\end{example}
We will say that a three term complex $M \xr{f} N \xr{g} P$ of comodules is relative short exact if $gf = 0$ and $f:M \to N$ is a relative monomorphism. A relative injective resolution of a comodule $M$ is a complex of the form
\[
0 \to M \to J^0 \to J^1 \to \cdots
\]
where each $J^i$ is relatively injective, and each three-term subsequence 
\[
J^{s-1} \to J^s \to J^{s+1},
\]
where $J^{-1} = M$ and $J^s = 0$ for $s< -1$, is relative short exact.  Note that, by definition, relative exact sequences are precisely those that give exact sequences of abelian groups after applying $\Hom_{\wComod_{\mE E}}(-,I)$, whenever $I$ is relative injective. 

We have the usual comparison theorem for relative injective resolutions. The proof is nearly identical to the standard inductive homological algebra proof - in this context see~\cite[Theorem 2.2]{husmoore}.
\begin{prop}\label{prop:comparasion}
	Let $M$ and $M'$ be objects in an additive category $\mathcal{C}$ with relative injective resolutions $P^\ast$ and $P'^\ast$, respectively. Suppose there is a map $f: M \to M'$. Then, there exists a chain map $f^{\ast}:P^\ast \to P'^\ast$ extending $f$ that is unique up to chain homotopy. 
\end{prop}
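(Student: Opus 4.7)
The plan is to follow the classical proof of the comparison theorem for injective resolutions, adapted to the relative setting: first construct $f^{\ast}$ by induction on degree, and then show that any two such extensions differ by a chain homotopy by a second induction. The only input needed at each stage is that a relative short exact three-term complex $A \to B \to C$ induces, for every relative injective $I$, an exact sequence of abelian groups $\Hom_{\mathcal{C}}(C,I) \to \Hom_{\mathcal{C}}(B,I) \to \Hom_{\mathcal{C}}(A,I)$, which is immediate from the definition of an injective class.

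For the existence of $f^{\ast}$, adopt the conventions $P^{-1} := M$, $P'^{-1} := M'$, $f^{-1} := f$, and $P^s = P'^s = 0$ for $s < -1$. Assume inductively that $f^{-1}, \ldots, f^s$ have been chosen with $f^i d^{i-1} = d'^{i-1} f^{i-1}$ for $0 \le i \le s$. Form the composite $g := d'^s \circ f^s : P^s \to P'^{s+1}$; using the chain map relations and $d'^s d'^{s-1} = 0$ one obtains $g \circ d^{s-1} = 0$. Since $P^{s-1} \to P^s \to P^{s+1}$ is relative short exact and $P'^{s+1}$ is relatively injective, the Hom-exactness above produces a map $f^{s+1} : P^{s+1} \to P'^{s+1}$ with $f^{s+1} d^s = g = d'^s f^s$. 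The initial step, producing $f^0$ from $f^{-1}$, is the same argument applied to the relative short exact complex $0 \to M \to P^0$.

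For uniqueness up to chain homotopy, let $f^{\ast}$ and $g^{\ast}$ be two such extensions and set $h^{\ast} := f^{\ast} - g^{\ast}$, so $h^{-1} = 0$. I would construct a chain homotopy consisting of maps $\sigma^n : P^n \to P'^{n-1}$ for $n \ge 0$, with $\sigma^0 := 0$, satisfying $h^n = d'^{n-1} \sigma^n + \sigma^{n+1} d^n$, again by induction on $n$. Given $\sigma^0, \ldots, \sigma^n$, the chain map property of $h^{\ast}$ together with the inductive hypothesis and $d'^{n-1} d'^{n-2} = 0$ yields $(h^n - d'^{n-1} \sigma^n) \circ d^{n-1} = 0$, and precisely the same Hom-exactness argument as above, applied to the relative short exact complex $P^{n-1} \to P^n \to P^{n+1}$ and the relative injective $P'^n$, produces $\sigma^{n+1} : P^{n+1} \to P'^n$ with $\sigma^{n+1} d^n = h^n - d'^{n-1} \sigma^n$.

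The main conceptual obstacle is that $\mathcal{C}$ is not assumed abelian, so none of the usual tools — kernels, cycles, or coboundaries — are directly available. This is exactly what the notion of a relative injective class is designed to sidestep: since the whole argument invokes only Hom-exactness against relative injectives, packaged into the definition of a relative short exact sequence, the classical inductive reasoning transfers verbatim. The argument is thus essentially formal once one grants the axioms of an injective class, as indicated in the analogous treatment of~\cite[Theorem 2.2]{husmoore}.
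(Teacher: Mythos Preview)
Your proof is correct and follows precisely the approach the paper indicates: the paper does not spell out an argument but refers to the standard inductive homological algebra proof as in \cite[Theorem 2.2]{husmoore}, and you have reproduced that argument faithfully. The key input you isolate --- Hom-exactness of a relative short exact sequence against relative injectives --- is exactly what the paper asserts as the characterizing property of relative exactness.
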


\begin{definition}(cf.~\cite[p.~7]{eilenbergmoore}.)\label{def:lcompleteext}
Let $M$ and $N$ be $L$-complete $\mE E$-comodules, and let $M$ be pro-free. Let $I^\ast$ be a relative injective resolution of $N$. Then, for all $s \ge 0$, we define
\[
\wExt_{\wComod_{\mE E}}^s(M,N) = H^s(\Hom_{\wComod_{\mE E}}(M,I^\ast)).
\]
\end{definition}
For brevity we will write $\wExt_{\mE E}^s(M,N)$ for this Ext functor. 

Note that~\Cref{prop:comparasion} implies that the derived functor is independent of the choice of relative injective resolution. 
\begin{rem}\begin{enumerate}
	\item The reader should compare this definition to~\Cref{lem:proj}.
	\item The category of $L$-complete $E_*$-modules has no non-zero injectives~\cite[p.~40]{barthel2013completed}; this suggests that the same is true of $L$-complete $\mE E$-comodules, which is yet another reason we need to use relative homological algebra. 
\end{enumerate}

\end{rem}
Let $M$ be an $L$-complete comodule. As in~\cite{mrw77} we have the standard, or cobar resolution of $M$, denoted $\Omega^*(\mE E,M)$, with
\[
\Omega^n(\mE E,M) = \underbrace{\mE E \btimes_{E_*} \cdots \btimes_{E_*} \mE E}_{n+1 \text{ times }} \btimes_{E_*}  M
\]
and differential
\begin{equation*}
\begin{split}
d(e_0 \btimes \cdots \btimes e_n \btimes m) &= \sum_{i=0}^n (-1)^i e_0 \btimes \cdots e_{i-1} \btimes \Delta(e_i) \btimes e_{i+1} \btimes \cdots \btimes m \\
& + (-1)^{n+1} e_0 \btimes \cdots e_n \btimes \psi(m).
\end{split}
\end{equation*}
The usual contracting homotopy of~\cite{mrw77} given by
\[
s(e_0 \btimes \cdots \btimes e_n \btimes m) = \epsilon(e_0) e_1 \btimes \cdots \btimes e_n \btimes m
\] shows that $(\Omega^*(\mE E,M),d)$ defines a relative injective resolution of $M$.

\begin{lemma}\label{lem:ext0}
Let $M$ and $N$ be $L$-complete $\mE E$-comodules. Then there is an isomorphism
\[
\wExt_{\mE E}^0(M,N) \simeq \Hom_{\wComod_{\mE E}}(M,N). 
\]
\end{lemma}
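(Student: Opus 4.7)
The plan is to compute $\wExt^0_{\mE E}(M,N)$ directly from the definition, using the cobar resolution $\Omega^*(\mE E,N)$ that was constructed above and shown to be a relative injective resolution of $N$. By definition, $\wExt^0$ is the kernel of the first induced differential
\[
d^* \colon \Hom_{\wComod_{\mE E}}(M, \mE E \btimes_{E_*} N) \longrightarrow \Hom_{\wComod_{\mE E}}(M, \mE E \btimes_{E_*} \mE E \btimes_{E_*} N),
\]
where, specialising the cobar formula to degree zero, the differential is $d = \Delta \btimes 1 - 1 \btimes \psi_N$.

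First, I would invoke \Cref{lem:adjoint} to identify the source with $\Hom_{\Mod_{E_*}}(M,N)$. Under this adjunction, an $L$-complete $E_*$-linear map $g\colon M \to N$ corresponds to the comodule map $f = (1 \btimes g) \circ \psi_M$, with inverse $f \mapsto (\epsilon \btimes 1) \circ f$.

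Next, I would translate the vanishing condition $d \circ f = 0$ across this bijection. Because $f$ is a comodule map and the coaction on $\mE E \btimes_{E_*} N$ is $\Delta \btimes 1$, the identity $(\Delta \btimes 1) \circ f = (1 \btimes f) \circ \psi_M$ holds automatically, so $d \circ f = 0$ reduces to
\[
(1 \btimes f)\circ \psi_M \;=\; (1 \btimes \psi_N)\circ f.
\]
Applying $\epsilon \btimes 1 \btimes 1$ to both sides and invoking the counit axiom $(\epsilon \btimes 1)\circ \psi_M = \mathrm{id}_M$ collapses this equation to $f = \psi_N \circ g$, or equivalently $(1 \btimes g)\circ \psi_M = \psi_N \circ g$ --- precisely the condition that $g$ be a comodule map. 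Conversely, any comodule map $g$ manifestly satisfies $df = 0$. The entire argument is a diagram chase against the comodule and counit axioms, so there is no substantive obstacle beyond keeping careful track of which tensor factor each structure map acts on.
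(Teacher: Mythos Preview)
Your proof is correct and follows essentially the same approach as the paper: both compute $\wExt^0$ via the cobar resolution, apply the extended-comodule adjunction of \Cref{lem:adjoint} to land in $\Hom_{\Mod_{E_*}}(M,N)$, and then identify the kernel of the induced differential with the comodule maps. The paper packages the last step by introducing $\psi_M^*(g)=(1\btimes g)\psi_M$ and $\psi_N^*(g)=\psi_N g$ and asserting that the adjoint of $d$ is $\psi_M^*-\psi_N^*$; your argument is a slightly more explicit unwinding of that same assertion, using the comodule identity $(\Delta\btimes 1)\circ f=(1\btimes f)\circ\psi_M$ and the counit axiom to carry out the check the paper leaves to the reader.
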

\begin{proof}
  Let $\psi_M:M \to \mE E \btimes_{E_*} M$ and $\psi_N:N \to \mE E \btimes_{E_*} N$ be the comodule structure maps. Define
  \[
\psi_M^\ast,\psi_N^\ast:\Hom_{\Mod_{E_*}}(M,N) \to \Hom_{\Mod_{E_*}}(M,\mE E \btimes_{E_*} N)
  \]
by
\[
\psi_M^\ast(f) = (1 \btimes f)\psi_M \quad \text{ and } \quad \psi_N^\ast(f) = \psi_Nf.
\]

    Note that (see~\cite[Proof of A1.1.6]{ravgreen} for the case of an ordinary Hopf algebroid) \[
    \Hom_{\wComod_{\mE E}}(M,N) = \ker(\psi_M^\ast - \psi_N^\ast).
    \]
The cobar complex begins
\[
\xymatrix{
  \mE E \ar[r] & \mE E \btimes_{E_*} N \ar[rr]^-{\Delta \btimes 1 - 1 \btimes \psi_N} && \mE E \btimes_{E_*} \mE E \btimes_{E_*} N. 
}
\]
Applying $\Hom_{\wComod_{\mE E}}(M,-)$ and using the adjunction of~\Cref{lem:adjoint} between extended $L$-complete $\mE E$-comodules and $L$-complete $E_*$-modules we see that
\[
\wExt_{\mE E}^0(M,N) = \ker\left(\Hom_{\Mod_{E_*}}(M,N) \xr{f} \Hom_{\Mod_{E_*}}(M,\mE E \btimes_{E_*} N)\right). 
\]
One can check that the map $f$ is precisely $\psi_M^\ast - \psi_N^\ast$, and the claim follows. 
\end{proof}
\section{The $K(n)$-local $E_n$-Adams spectral sequence}
\subsection{Adams spectral sequences}
    Here we present some standard material on Adams-type spectral sequences following~\cite{millerrelations,devhop04}. Throughout this section we always work in the homotopy category of spectra. 

    Let $R$ be a ring spectrum. We say that a spectrum $I$ is $R$-injective if the map $I \to R \wedge I$ induced by the unit is split. A sequence of spectra $X' \to X \to X''$ is called $R$-exact if the composition is trivial and
    \[
[X',I] \leftarrow [X,I] \leftarrow [X'',I]
    \]
    is exact as a sequence of abelian groups for each $R$-injective spectrum $I$. An $R$-resolution of a spectrum $X$ is then an $R$-exact sequence of spectra (i.e., each three term subsequence is $R$-exact) 
    \[
\ast \to X \to I^0 \to I^1  \to \cdots
    \]
    such that each $I^s$ is $R$-injective. Given an $R$-resolution of $X$ we can always form an Adams resolution of $X$; that is, a diagram
    \[
    \begin{tikzcd}
        X=X_0\arrow[swap]{dr}{j} && \arrow[swap]{ll}{i} X_1  \arrow[swap]{dr}{j} && \arrow[swap]{ll}{i}  X_2 \arrow[swap]{dr}{j} && \arrow[swap]{ll}{i}  X_3 \\
        & I^0 \arrow[dashed,swap]{ur}{k} && \Sigma^{-1}I^1 \arrow[dashed,swap]{ur}{k} && \Sigma^{-2}I^2 \arrow[dashed,swap]{ur}{k}
    \end{tikzcd}
    \cdots
    \]
    such that each $\Sigma^{-s} I^s$ is $R$-injective and each $X_{k+1} \to X_k \to \Sigma^{-k}I^k$ is a fiber sequence. Note that the composition $I^k \to \Sigma^{k+1} X_{k+1} \to I^{k+1}$ corresponds to the original morphism in the $R$-resolution of $X$.

Given such a diagram we can always form the following exact couple
\[
\begin{tikzcd}
D^{s+1,t+1} = \pi_{t-s}(X_{s+1}) \arrow{rr}{i} && \pi_{t-s}(X_s) = D^{s,t} \arrow{dl}{j} & \\
& E_1^{s,t} = \pi_{t-s}(\Sigma^{-s}I^s). \arrow[dashed]{ul}{k}
\end{tikzcd}
\]
If we form the standard resolution, where $I^k = R^{\wedge(k+1)} \wedge X$ for $k \ge 0$, and if $R_*R$ is a flat $R_*$-module, then it is not hard to see that on the $E_1$-page we get the following sequence
\[
0 \to R_*X \to R_*R \otimes_{R_*} R_*X \to R_*R^{\otimes 2} \otimes_{R_*} R_*X \to \cdots.
\]
By explicitly checking the maps one can see that this is the cobar complex for computing $\Ext$, and so we get the usual Adams spectral sequence
\[
\Ext^{\ast,\ast}_{R_*R}(R_*,R_*X) \Rightarrow \pi_* X^\wedge_R.
\]
  
Here $X^\wedge_R$ is the $R$-nilpotent completion of $X$~\cite{bousfield79}. This construction can be suitably modified to construct the $F$-local $R$-Adams spectral sequence (see~\cite[Appendix A]{devhop04}), where $F$ is any spectrum. Following Devinatz and Hopkins say an $F$-local spectrum $I$ is $R$-injective if the map $I \to L_F(R \wedge I)$ is split. The definition of $R$-exact sequence and $R$-exact resolution then follow in the same way as the unlocalised case. 

We specialise to the case where $F = K(n)$ and $R = E$ is Morava $E$-theory. Following~\cite[Remark A.9]{devhop04} we take $I^j = L_{K(n)}(E^{\wedge (j+1)} \wedge X)$.  The formulas of~\cite[Construction 4.11]{devhop04} actually show that the $I^j$ form an Adams resolution (in fact they can be assembled into a cosimplicial resolution). Here is our main result.
\begin{theorem}\label{thm:ass}
     Let $X$ and $Y$ be spectra and suppose that $\mE X$ is pro-free, and $\mE Y$ is either a finitely-generated $E_*$-module, pro-free, or has bounded $\frak{m}$-torsion (i.e., is annihilated by some power of $\frak{m}$). Then the $E_2$-term of the $K(n)$-local $E$-Adams spectral sequence with abutment $\pi_{t-s}F(X,L_{K(n)}Y)$ is
    \[
E_2^{s,t} =\wExt^{s,t}_{\mE E}(\mE X,\mE Y). 
    \] 
\end{theorem}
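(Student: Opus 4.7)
The plan is to identify the $E_1$-page of the $K(n)$-local $E$-Adams spectral sequence with the cobar complex computing $\wExt_{\mE E}$, and then check that the $d_1$ differential agrees with the cobar differential.

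With $I^j = L_{K(n)}(E^{\wedge(j+1)} \wedge Y)$, the construction of the Adams tower and the functor $F(X,-)$ give
\[
E_1^{s,t} = \pi_t F(X, L_{K(n)}(E^{\wedge(s+1)} \wedge Y)).
\]
First, since $\mE X$ is pro-free, the universal coefficient theorem (\Cref{lem:UCT}) applied with $Y$ replaced by $E^{\wedge s} \wedge Y$ rewrites this as $\Hom_{E_*}(\mE X, \mE(E^{\wedge(s+1)} \wedge Y))_t$. Second, the hypothesis on $\mE Y$ allows \Cref{lem:iteratedkunneth} to identify $\mE(E^{\wedge(s+1)} \wedge Y) \simeq (\mE E)^{\btimes(s+1)} \btimes_{E_*} \mE Y$. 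Third, since this target is an extended $L$-complete $\mE E$-comodule on $N = (\mE E)^{\btimes s} \btimes_{E_*} \mE Y$, the adjunction of \Cref{lem:adjoint} produces
\[
E_1^{s,t} \simeq \Hom_{\wComod_{\mE E}}(\mE X,\, (\mE E)^{\btimes(s+1)} \btimes_{E_*} \mE Y)_t = \Hom_{\wComod_{\mE E}}(\mE X, \Omega^s(\mE E, \mE Y))_t,
\]
exhibiting the $E_1$-page as the cobar complex in $t$-degree $t$.

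The essential step is then to match $d_1$ with the cobar differential. The differential $d_1$ is induced by the composite $I^s \to \Sigma^{s+1}X_{s+1} \to I^{s+1}$ from the Adams resolution, which by \cite[Construction 4.11]{devhop04} is the alternating sum of coface maps of the cosimplicial object $L_{K(n)}(E^{\wedge(\bullet+1)} \wedge Y)$: each coface inserts a unit $S \to E$ into one of the smash factors of $E^{\wedge(s+1)}$ or applies the map $Y \to E \wedge Y$. After applying $\mE$ and the Künneth and adjunction identifications of the previous paragraph, these coface maps become, respectively, the comultiplication $\Delta$ applied in one of the $\mE E$-factors and the coaction $\psi \colon \mE Y \to \mE E \btimes_{E_*} \mE Y$, with the same alternating signs as the cobar differential on $\Omega^\ast(\mE E, \mE Y)$.

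Given this match, $E_2^{s,t}$ is the $s$-th cohomology of $\Hom_{\wComod_{\mE E}}(\mE X, \Omega^\ast(\mE E, \mE Y))$ in internal degree $t$. Since $\Omega^\ast(\mE E, \mE Y)$ is a relative injective resolution of $\mE Y$ (via the contracting homotopy recalled after \Cref{def:lcompleteext}) and $\mE X$ is pro-free, this cohomology is by definition $\wExt^{s,t}_{\mE E}(\mE X, \mE Y)$, completing the identification. The main obstacle is the bookkeeping in the middle paragraph: tracking through the adjunction and Künneth identifications to confirm that the geometric coface maps become precisely the cobar coface maps (rather than merely some chain complex computing the same $\wExt$); the rest is essentially formal given the preparatory material in the paper.
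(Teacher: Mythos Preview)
Your proof is correct and essentially identical to the paper's: both apply \Cref{lem:UCT}, \Cref{lem:iteratedkunneth}, and \Cref{lem:adjoint} in succession to identify the $E_1$-page with the cobar complex for $\wExt_{\mE E}(\mE X,\mE Y)$. There is a harmless indexing slip in your second step---applying \Cref{lem:UCT} with $Y$ replaced by $E^{\wedge s}\wedge Y$ yields $\Hom_{E_*}(\mE X,\mE(E^{\wedge s}\wedge Y))$, not $\mE(E^{\wedge(s+1)}\wedge Y)$---but this cancels against the adjunction step and your final formula agrees with the paper's; you are also more explicit than the paper about matching $d_1$ with the cobar differential.
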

\begin{proof}
    By mapping $X$ into an Adams resolution of $L_{K(n)}Y$ we obtain an exact couple with $E_1^{s,t} = \pi_{t-s}F(X,\Sigma^{-s}I^s) \simeq \pi_tF(X,I^s)$. Unwinding the exact couple we see that the $E_2$-page is the cohomology of the complex
    \[
\pi_*F(X,I^0) \to \pi_*F(X,I^1) \to \pi_*F(X,I^2) \to \cdots.
    \]

As usual, the Adams spectral sequence is independent of the choice of resolution from the $E_2$-page onwards, so we use the standard resolution, i.e., we let  $I^s = L_{K(n)}(E^{\wedge (s+1)} \wedge Y)$.  Applying~\Cref{lem:UCT} (which we can do under the assumption that $\mE X$ is pro-free) we see that
    \[
    \begin{split}
    \pi_\ast F(X,I^s)&\simeq \Hom_{E_*}(\mE X,\mE (E^{\wedge s} \wedge Y))     \\
    &\simeq \Hom_{\Mod_{E_*}}(\mE X,\mE (E^{\wedge s} \wedge Y)),
    \end{split}
\]
where the latter follows from the fact that $\mE(-)$ is always $L$-complete. 

By~\Cref{lem:iteratedkunneth} we have $\mE(E^{ \wedge s} \wedge Y) \simeq (\mE E)^{\btimes s} \btimes_{E_*} \mE Y$. Using the adjunction between extended comodules and $L$-complete $E_*$-modules we get
\[
 \Hom_{\Mod_{E_*}}(\mE X,\mE (E^{\wedge s} \wedge Y)) \simeq \Hom_{\wComod_{\mE E}}(\mE X, (\mE E)^{\btimes (s+1)} \btimes_{E_*} \mE Y).
\]
This implies that the $E_2$-page is the cohomology of the complex
\[
\Hom_{\wComod_{\mE E}}(\mE X,\mE E \btimes_{E_*} \mE Y) \to \Hom_{\wComod_{\mE E}}(\mE X,(\mE E)^{\btimes 2} \btimes_{E_*} \mE Y) \to \cdots
\]
which is precisely $\wExt^{\ast,\ast}_{\mE E}(\mE X,\mE Y)$. 
\end{proof}
The following is now a consequence of~\cite[Theorem 2]{devhop04} and uniqueness of the $E_2$-term.
\begin{cor}\label{cor:e2htfp}
    Let $X$ be a spectrum such that $\mE X$ is pro-free. If $F$ is a closed subgroup of $\G_n$, then there is an isomorphism
    \[
\wExt^{s,t}_{\mE E}(\mE X,\mE E^{hF}) \simeq H_c^s(F,\pi_tF(X,E)) \simeq H_c^s(F,E^{-t}X).
    \]
\end{cor}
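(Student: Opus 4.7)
The plan is to recognize both sides as the $E_2$-term of the same spectral sequence, namely the $K(n)$-local $E$-Adams spectral sequence with abutment $\pi_*F(X, E^{hF}) \cong (E^{hF})^{*}X$, and then invoke the fact that the $E_2$-page of an Adams-type spectral sequence is independent of the choice of resolution, so the two descriptions must agree.

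First, I would verify the hypotheses of \Cref{thm:ass} with $Y = E^{hF}$. By \Cref{rem:ktheory}, $\mE E^{hF}$ is pro-free for every closed subgroup $F \subset \G_n$, since $K_*E^{hF}$ is concentrated in even degrees. Because $E^{hF}$ is $K(n)$-local, $L_{K(n)}E^{hF} \simeq E^{hF}$, and \Cref{thm:ass} therefore identifies the $E_2$-page of the $K(n)$-local $E$-Adams spectral sequence associated to the pair $(X, E^{hF})$ as
\[
E_2^{s,t} \cong \wExt^{s,t}_{\mE E}\bigl(\mE X,\, \mE E^{hF}\bigr),
\]
converging to $\pi_{t-s}F(X, E^{hF})$. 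On the other hand, \cite[Theorem 2]{devhop04} identifies the $E_2$-term of this same spectral sequence as the continuous group cohomology $H_c^s(F, E^{-t}X)$, abutting to $(E^{hF})^{s-t}X$. Since both descriptions refer to the $E_2$-page of the same $K(n)$-local $E$-Adams spectral sequence (and the spectral sequence is independent of the choice of resolution from $E_2$ onwards, as already noted in the proof of \Cref{thm:ass}), the two must be canonically isomorphic. The second isomorphism in the statement is then the tautological identification $E^{-t}X = [X, \Sigma^{-t}E] \cong \pi_tF(X,E)$.

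The only substantive point to check is that the cosimplicial object $\{L_{K(n)}(E^{\wedge(\bullet+1)} \wedge X)\}$ used by Devinatz and Hopkins to build their spectral sequence genuinely produces a $K(n)$-local $E$-Adams resolution of $L_{K(n)}X$ in the sense employed here; but this is exactly the standard resolution taken in the proof of \Cref{thm:ass} and is built into the construction of the $K(n)$-local $E$-Adams spectral sequence in \cite[Appendix A]{devhop04}. Once this identification of spectral sequences is in place, the corollary follows immediately, so the main potential obstacle is a bookkeeping one rather than a mathematical one: confirming that the grading conventions and the cosimplicial identifications of Devinatz--Hopkins match those used in \Cref{thm:ass}.
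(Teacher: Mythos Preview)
Your proposal is correct and is exactly the argument the paper gives: both sides are identified as the $E_2$-term of the same $K(n)$-local $E$-Adams spectral sequence (via \Cref{thm:ass} on one side and \cite[Theorem 2]{devhop04} on the other), and uniqueness of the $E_2$-term forces them to agree. The paper's proof is the one-line version of what you wrote, with your verification that $\mE E^{hF}$ is pro-free (via \Cref{rem:ktheory}) supplying the hypothesis check that the paper leaves implicit.
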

\section{Identification of the $E_2$-term with group cohomology}\label{sec:morCOR}
It has been known since the work of Morava~\cite{mor85}, that completed $\Ext$ groups (as considered in~\cite{mordevinatz}) can, under some circumstances, be identified with continuous group cohomology. The results of this section say that our $L$-complete $\Ext$ groups can also be identified with continuous group cohomology. In fact, in many cases complete and $L$-complete $\Ext$ groups coincide, although we do not make this statement precise. Before we can give our result identifying $L$-complete $\Ext$ groups and group cohomology, we need two preliminary lemmas. We write $M \htimes_{E_*} N$ for the $\frak{m}$-adic completion of the ordinary tensor product. 
\begin{lemma}\label{lem:completefg}
  Let $M$ be a pro-free $E_*$-module, and $N$ a finitely-generated $E_*$-module. Then
  \[
M \htimes_{E_*}N \simeq M \otimes_{E_*} N.
  \]
\end{lemma}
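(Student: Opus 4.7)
The plan is to prove the stronger statement that $M \otimes_{E_*} N$ is already $\frak{m}$-adically complete, so that the natural map $M \otimes_{E_*} N \to M \htimes_{E_*} N$ is automatically an isomorphism. Since $E_*$ is regular local Noetherian of Krull dimension $n$, it has finite global dimension, and every finitely-generated $E_*$-module has projective dimension at most $n$. I would proceed by induction on the projective dimension of $N$.

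For the base case, $N$ is finitely-generated projective, hence free over the local ring $E_*$, so $N \cong E_*^c$ for some finite $c$. Then $M \otimes_{E_*} N \cong M^c$ is a finite direct sum of pro-free modules, hence itself pro-free, and thus $\frak{m}$-adically complete by~\Cref{prop:completemodules}. For the inductive step, I would choose a short exact sequence $0 \to K \to E_*^b \to N \to 0$ with $b$ finite and $K$ finitely-generated (by Noetherianity); the module $K$ has strictly smaller projective dimension than $N$. Tensoring with the flat module $M$ gives a short exact sequence $0 \to M \otimes K \to M^b \to M \otimes N \to 0$ in which $M^b$ is pro-free hence complete, and $M \otimes K$ is complete by the inductive hypothesis.

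It then suffices to show that $M \otimes K$ is closed in $M^b$ in the $\frak{m}$-adic topology, since a quotient of a complete module by a closed submodule is complete. I would verify closedness via the Artin-Rees lemma applied to the finitely-generated submodule $K \subseteq E_*^b$: there exists a constant $c$ with $\frak{m}^n E_*^b \cap K \subseteq \frak{m}^{n-c} K$ for all $n \geq c$. Because $M$ is flat, tensoring preserves intersections of submodules of a common module, yielding $\frak{m}^n M^b \cap (M \otimes K) = M \otimes (\frak{m}^n E_*^b \cap K) \subseteq \frak{m}^{n-c}(M \otimes K)$. This shows that the subspace topology on $M \otimes K$ inherited from $M^b$ coincides (up to a shift) with its intrinsic $\frak{m}$-adic topology, so the complete submodule $M \otimes K$ is closed in $M^b$, closing the induction.

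The main obstacle is that $M$ is not finitely-generated over $E_*$, so the classical Artin-Rees lemma does not apply directly to the tensored map $M^a \to M^b$ coming from a presentation of $N$. The key trick is therefore to run Artin-Rees at the level of the finitely-generated modules $K \subseteq E_*^b$, where it is available, and then propagate the topological comparison to $M \otimes K \subseteq M^b$ via flatness of $M$; this circumvents the fact that the alternative, more homological approach through the $L_s$-machinery of~\Cref{sec:lcompletion} would require directly controlling a $\varprojlim^1$-obstruction that is not obviously killed by the tameness provided by~\Cref{lem:tameness}.
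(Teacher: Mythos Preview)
Your argument is correct, but it takes a genuinely different route from the paper's proof. The paper does not induct on the projective dimension of $N$ at all: instead it reduces on the $M$ side, using the fact (\cite[Proposition A.13]{hs99}) that any pro-free module is a retract of a product $\prod_I E_*$, and then computes directly for $M=\prod_I E_*$ by combining the commutation of products with tensoring by finitely-presented modules (\cite[Proposition 4.44]{MR1653294}) with the commutation of products with inverse limits. This gives a short chain of isomorphisms with no induction and no Artin--Rees.

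Your approach trades that structural input (the retract characterization of pro-free modules) for classical commutative algebra: Artin--Rees on the finitely-generated side, transported through the flat module $M$. This is a reasonable exchange; your argument would generalize to settings where one has a flat complete $M$ but no retract-of-product description, whereas the paper's argument is tied to the specific structure of pro-free modules. One step in your write-up that deserves a word more of care is the assertion that ``a quotient of a complete module by a closed submodule is complete'': this is true here because the filtration is countable and the quotient topology on $M\otimes N$ coincides with its $\frak{m}$-adic topology (images of $\frak{m}^k M^b$ are exactly $\frak{m}^k(M\otimes N)$), but it is worth making that explicit, since closedness alone is not what drives the completeness of the quotient --- the cofinality of filtrations you extracted from Artin--Rees is doing the real work in the $\varprojlim$ computation.
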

\begin{proof}
For a fixed finitely generated $E_*$-module $N$, the category of $E_*$-modules $M$ for which the conclusion of the lemma holds is closed under retracts.
  By~\cite[Proposition A.13]{hs99} $M = L_0(\oplus_I E_*)$ is a retract of $\prod_I E_*$, and so it suffices to prove the lemma for $M=\prod_I E_*$. Note that by~\cite[Proposition 4.44]{MR1653294} an $E_*$-module $N$ is finitely-presented (equivalently, finitely-generated, since $E_*$ is Noetherian) if and only if for any collection $\{ C_\alpha \}$ of $E_*$-modules, the natural map  $N \otimes_{E_*} \prod C_\alpha \to \prod(N \otimes_{E_*} C_\alpha)$ is an equivalence. 

  We then have a series of equivalences
  \[
\begin{split}
  (\prod_I E_*) \htimes_{E_*} N &= \varprojlim_k (E_*/\frak{m}^k \otimes_{E_*} (\prod_I E_*) \otimes_{E_*} N) \ \\
  & \simeq \varprojlim_k \prod_I (E_*/\frak{m}^k \otimes_{E_*} N)\\
  & \simeq \prod_I \varprojlim (E_*/\frak{m}^k \otimes_{E_*} N) \\
  & \simeq \prod_I N^\wedge_{\frak{m}} \\
  & \simeq \prod_I N \\
  & \simeq (\prod_I E_*) \otimes_{E_*} N\qedhere. 
\end{split}
  \]
\end{proof}
\begin{lemma}\label{lemma:lcompletetotensor}
Let $M$ and $N$ be $E_*$-modules. Suppose that $M$ is pro-free and $N$ is either pro-free, finitely-generated as an $E_*$-module, or has bounded $\frak{m}$-torsion. Then
\[
M \htimes_{E_*} N \simeq M \btimes_{E_*} N.
\]
\end{lemma}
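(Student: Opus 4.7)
The plan is to treat the three hypotheses on $N$ separately, in each case identifying both $L_0(M \otimes_{E_*} N)$ and $(M \otimes_{E_*} N)^\wedge_\fm$ with a common module; since the natural surjection $L_0(M \otimes_{E_*} N) \twoheadrightarrow (M \otimes_{E_*} N)^\wedge_\fm$ coming from \eqref{eq:kernel} is the canonical map $M \btimes_{E_*} N \to M \htimes_{E_*} N$, this will prove the claim.

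In the first two cases the work is minimal. If $N$ is pro-free, then $M$ and $N$ are both flat by \Cref{rem:tamemodules}(i), so $M \otimes_{E_*} N$ is flat, and invoking \Cref{rem:tamemodules}(i) a second time yields $L_0(M \otimes_{E_*} N) \simeq (M \otimes_{E_*} N)^\wedge_\fm$ directly. If $N$ has bounded $\fm$-torsion, then so does $M \otimes_{E_*} N$, and both functors $L_0$ and $(-)^\wedge_\fm$ act as the identity on such modules (by \Cref{rem:tamemodules}(iii) and the trivial fact that $\fm^k$ annihilates the module for $k \gg 0$).

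The substantive case is $N$ finitely-generated. Here \Cref{lem:completefg} already supplies the identification $M \htimes_{E_*} N \simeq M \otimes_{E_*} N$, so it suffices to show $L_0(M \otimes_{E_*} N) \simeq M \otimes_{E_*} N$, i.e., that $M \otimes_{E_*} N$ is itself $L$-complete. I would do this by the same retract device used in the proof of \Cref{lem:completefg}: write $M$ as a retract of $\prod_I E_*$ via \cite[Proposition A.13]{hs99}, so that $M \otimes_{E_*} N$ is a retract of $(\prod_I E_*) \otimes_{E_*} N \simeq \prod_I N$, where the last isomorphism uses finite presentation of $N$ over the Noetherian ring $E_*$ together with \cite[Proposition 4.44]{MR1653294}. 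Since $N$ is $L$-complete by \Cref{rem:tamemodules}(ii) and $L$-completeness is preserved under arbitrary products and retracts inside $\operatorname{Mod}_{E_*}$, it follows that $M \otimes_{E_*} N$ is $L$-complete.

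I do not foresee a serious obstacle beyond carefully reusing the ingredients of \Cref{lem:completefg}; the main conceptual point is simply that in each of the three cases both the $L$-completion and the $\fm$-adic completion of $M \otimes_{E_*} N$ collapse to $M \otimes_{E_*} N$ itself (for entirely different reasons in each case), so the natural map between them is an isomorphism.
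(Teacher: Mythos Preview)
Your proposal is correct and follows the same three-case split as the paper, with identical arguments in the pro-free and bounded $\fm$-torsion cases. The only divergence is in the finitely-generated case: the paper invokes \cite[Proposition~A.4]{hs99} to obtain $L_0(M \otimes_{E_*} N) \simeq L_0(M) \otimes_{E_*} N \simeq M \otimes_{E_*} N$ directly, whereas you instead recycle the retract-of-a-product trick from the proof of \Cref{lem:completefg} to show $M \otimes_{E_*} N$ is $L$-complete. Both work; the paper's citation is quicker, while your version is a touch more self-contained since it avoids an additional external reference and reuses machinery already in hand.
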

\begin{proof}
Note that in each case~\Cref{prop:completemodules} implies that $M$ and $N$ are both complete in the $\frak{m}$-adic topology.    When $N$ is finitely generated there is an isomorphism~\cite[Proposition A.4]{hs99}
    \[
M \btimes_{E_*} N \simeq L_0(M) \otimes_{E_*} N \simeq M \otimes_{E_*} N,
    \]
    where the last isomorphism follows from the fact that $M$ is already $L$-complete.  Since $N$ is finitely-generated~\Cref{lem:completefg} implies that $M \otimes_{E_*} N \simeq M \htimes_{E_*} N$.

  Now suppose that $N$ has bounded $\frak{m}$-torsion. Note that $M \otimes_{E_*} N$ is still bounded $\frak{m}$-torsion, and so $M \btimes_{E_*} N \simeq M \otimes_{E_*} N$. Furthermore, there is an isomorphism $M \otimes_{E_*} N \simeq M \htimes_{E_*} N$, since the inverse system defining the completed tensor product is eventually constant. 

    For the final case, assume that $N$ is pro-free. Since both $M$ and $N$ are flat $E_*$-modules the same is true for $M \otimes_{E_*} N$. This implies (see~\Cref{rem:tamemodules}) that $M \htimes_{E_*} N \simeq M \btimes_{E_*} N$. 
\end{proof}
We can now identify when the $E_2$-term of the $K(n)$-local $E$-Adams spectral sequence is given by continuous group cohomology. 
\begin{theorem}\label{thm:COR}
Suppose that $X$ is a spectrum such that $\mE X$ is either a finitely-generated $E_*$-module, pro-free, or has bounded $\frak{m}$-torsion. 
Then, for the $K(n)$-local $E$-Adams spectral sequence with abutment $\pi_*L_{K(n)}X$, there is an isomorphism
    \[
E_2^{s,t} = \wExt^{s,t}_{\mE E}(E_*,\mE X) \simeq H_c^s(\G_n,E^\vee_t X).
    \]
\end{theorem}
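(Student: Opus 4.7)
The plan is to start from \Cref{thm:ass} applied with first argument $S^0$ (note that $\mE S^0 = E_*$ is pro-free, so the hypothesis is satisfied) and second argument $X$. This immediately produces
\[
E_2^{s,t} = \wExt^{s,t}_{\mE E}(E_*,\mE X),
\]
so the entire content of the statement is the second isomorphism, namely the identification of this relative Ext group with continuous group cohomology. Everything else reduces to an identification of complexes.

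Next I would compute $\wExt^{s,t}_{\mE E}(E_*,\mE X)$ explicitly via the cobar resolution $\Omega^{\ast}(\mE E, \mE X)$ introduced just before \Cref{lem:ext0}. Applying $\Hom_{\wComod_{\mE E}}(E_*, -)$ to each extended comodule $(\mE E)^{\btimes(s+1)} \btimes_{E_*}\mE X$ and using the adjunction of \Cref{lem:adjoint}, one finds that $\wExt^{\ast,t}_{\mE E}(E_*,\mE X)$ is the cohomology of the reduced cobar complex
\[
\mE X \to \mE E \btimes_{E_*} \mE X \to (\mE E)^{\btimes 2} \btimes_{E_*} \mE X \to \cdots,
\]
with the usual alternating-sum differential induced by $\Delta$, the unit, and the coaction $\psi$ on $\mE X$.

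The heart of the argument, and the main obstacle, is to identify this cobar complex term-by-term with the complex $\Hom^c(\G_n^{\bullet}, \mE X)$ of continuous cochains. The starting point is Strickland's identification $\mE E \cong \Hom^c(\G_n, E_*)$, which iterates (by the description of $\mE E$ as a pro-free module together with the coproduct structure) to $(\mE E)^{\btimes s} \cong \Hom^c(\G_n^s, E_*)$ as $L$-complete $E_*$-modules. To promote this to an identification
\[
(\mE E)^{\btimes s} \btimes_{E_*} \mE X \;\cong\; \Hom^c(\G_n^s, \mE X),
\]
I would first use \Cref{lemma:lcompletetotensor} (together with \Cref{prop:completemodules}) to replace $\btimes_{E_*}$ by the $\frak{m}$-adic completed tensor product $\htimes_{E_*}$; this is where the three hypotheses on $\mE X$ get used. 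Then an inverse-limit argument writing $\mE X = \varprojlim_k \mE X/\fm^k\mE X$ and $\Hom^c(\G_n^s, E_*) = \varinjlim_i \Hom(G_i^s, E_*)$ with $G_i$ finite quotients of $\G_n$, together with \Cref{lem:completefg} to pass the finite module $E_*/\fm^k$ across a product, allows one to commute the limit and colimit and exhibit the desired isomorphism. In the bounded-$\fm$-torsion case the inverse system is eventually constant and the argument simplifies; in the pro-free case flatness ensures everything behaves well; the finitely-generated case follows directly from \Cref{lem:completefg}.

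Finally, I would verify that the differentials in the two complexes coincide. The cobar differential on $(\mE E)^{\btimes s}\btimes_{E_*}\mE X$ is built from $\Delta$, the unit map $E_* \to \mE E$, and the coaction $\psi\colon \mE X \to \mE E \btimes_{E_*}\mE X$; under the above identification, $\Delta$ corresponds to the map $\Hom^c(\G_n^s, E_*) \to \Hom^c(\G_n^{s+1}, E_*)$ induced by multiplication $\G_n \times \G_n \to \G_n$, the unit corresponds to insertion of the identity, and the coaction $\psi$ corresponds to the $\G_n$-action on $\mE X$. This is precisely the standard continuous cochain differential, so passing to cohomology yields the desired isomorphism with $H^s_c(\G_n, E^\vee_t X)$, completing the proof.
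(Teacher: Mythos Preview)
Your proposal is correct and follows essentially the same route as the paper's proof: both compute $\wExt$ via the cobar resolution, use the adjunction to reduce to the cohomology of $(\mE E)^{\btimes \ast}\btimes_{E_*}\mE X$, identify $(\mE E)^{\btimes s}$ with $\Hom^c(\G_n^s,E_*)$, invoke \Cref{lemma:lcompletetotensor} to pass from $\btimes$ to $\htimes$, and then use $\frak{m}$-adic completeness of $\mE X$ to obtain $\Hom^c(\G_n^s,\mE X)$. The only minor differences are that the paper obtains the iterated identification $(\mE E)^{\btimes s}\simeq \Hom^c(\G_n^s,E_*)$ via \Cref{lem:iteratedkunneth} and the Devinatz--Hopkins description of $\mE(E^{\wedge s})$, and defers the differential check to~\cite{lawsonnaumann}, whereas you iterate the single-copy identification directly and spell out the differential comparison and the limit argument yourself.
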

To prove this we will need to be explicit about the definition of continuous group cohomology we use, following~\cite[Section 2]{tate_k2}. Let $N$ be a topological $\G_n$-module and define
\[
C^k(\G_n,N) = \Hom^c(\G_n^k,N),
\]
the group of continuous functions from $\G_n^k$ to $N$, where $\G_n^k = \underbrace{\G_n \times \ldots \times \G_n}_{k\text{ times}}$ and $k\ge 0$. 
Define morphisms $d:C^k(\G_n,N) \to C^{k+1}(\G_n,N)$ by
\[
\begin{split}
(df)(g_1,\ldots,g_{k+1}) = g_1f(g_2,\ldots,g_{k+1}) &+ \sum_{j=1}^k (-1)^jf(g_1,\ldots,g_j g_{j+1},\ldots,g_{k+1})\\
& + (-1)^{k+1}f(g_1,\ldots,g_{k}). 
\end{split}
\]
One can check that $d^2=0$ and thus we obtain a complex $C^\ast(\G_n,N)$. We then define $H_c^*(\G_n,N)$ as $ H^*(C^\ast(\G_n,N),d)$. Of course, the same definition holds for any closed subgroup $G \subset \G_n$. We refer the reader to~\cite[Section 2]{davis06} for a more thorough discussion of various notions of continuous group cohomology used in chromatic homotopy theory. 
\begin{proof}
As previously we have that
\[
\begin{split}
\wExt_{\mE E}^{\ast,\ast}(E_*,\mE X) &\simeq H^*(\Hom_{\wComod_{\mE E}}(E_*,  (\mE E)^{\btimes (\ast+1)}\btimes_{E_*} \mE X)) \\
& \simeq H^*(\Hom_{E_*}(E_*,(\mE E)^{\btimes \ast}\btimes_{E_*} \mE X)) \\
& \simeq H^*( (\mE E)^{\btimes \ast} \btimes_{E_*} \mE X).
\end{split}
\]
By \Cref{lem:iteratedkunneth} we see there is an equivalence $\mE E^{\btimes \ast} \simeq \mE (E^{\wedge \ast})$, which is isomorphic to $\Hom^c(\G_n^{ \ast},E_*)$~\cite[p.~9]{devhop04}. This is pro-free by~\cite[Theorem 2.6]{hovoperations} and so applying~\Cref{lemma:lcompletetotensor} we see that 
\[
\begin{split}
(\mE E)^{\btimes \ast} \btimes_{E_*} \mE X &\simeq  \Hom^c(\G_n^\ast,E_*)\btimes_{E_*} \mE X \\
&\simeq \Hom^c(\G_n^\ast,E_*) \htimes_{E_*} \mE X.
\end{split}
\]
Since, under our assumptions, $\mE X$ is $\mathfrak{m}$-adically complete we have that 
\[
\Hom^c(\G_n^\ast,E_*) \htimes_{E_*} \mE X \simeq \Hom^c(\G_n^\ast,\mE X). 
\]
Then 
\[
H^*((\mE E)^{\btimes \ast} \btimes_{E_*} \mE X ) \simeq H^*(\Hom^c(\G_n^\ast,\mE X)).
\]

As in~\cite[Proof of Theorem 5.1]{lawsonnaumann} one can see that the latter is precisely $H_c^\ast(\G_n,\mE X)$. 
    
\end{proof}
\begin{rem}\label{rem:specseq}
In~\cite{davislawson} the authors give several examples where the $E_2$-term of the $K(n)$-local $E$-Adams spectral sequence for $\pi_*L_{K(n)}X$ can be identified with continuous group cohomology; we compare~\Cref{thm:COR} with these.  The following cases are considered.
\begin{enumerate}[(a)]
    \item By~\cite[Theorem 2(ii)]{devhop04}, if $X$ is finite then $E_2^{s,\ast} = H_c^s(\G_n,E_*X)$. If $X$ is finite, then smashing with it commutes with localisation and so $E_*X = \mE X$. By induction on the number of cells one can check that if $X$ is finite then $K_*X$ is finite (in each degree), where $K$ is the 2-periodic version of Morava $K$-theory used in~\Cref{rem:ktheory}.  By~\cite[Theorem 8.6]{hs99} this is equivalent to $\mE X$ being finitely generated.
    \item By~\cite[Theorem 5.1]{lawsonnaumann} if $E_*X$ is a flat $E_*$-module then $E_2^{s,\ast} = H^s_c(\G_n,\mE X)$. But by~\Cref{rem:tamemodules} if $E_*X$ is flat then $\mE X = L_0(E_*X) = (E_*X)^\wedge_\frak{m}$ is pro-free.
    \item By~\cite[Proposition 7.4]{hms} if $\mathcal{K}_{n,\ast}(X)$ is finitely generated as an $E_*$-module then $E_2^{s,\ast}=H^s_c(\G_n,\mathcal{K}_{n,\ast}(X))$. Here $\mathcal{K}_{n,\ast}(X) = \varprojlim_I E_*(X \wedge M_I)$. We suspect, but have been unable to prove, that if $\mathcal{K}_{n,\ast}(X)$ is finitely generated then $\mathcal{K}_{n,\ast}(X) \simeq \mE X$. We note that if $\mE X$ is finitely generated, then $X$ is dualisable~\cite[Theorem 8.6]{hs99}, and in this case the $\lim^1$ term in the Milnor exact sequence~\eqref{eq:milnor} vanishes~\cite[Proposition 6.2]{barthel2013completed}, so that $\mE X \simeq \mathcal{K}_{n,\ast}(X)$. 
    \item The last case considered is more complex. Let $X$ be a spectrum such that, for each $E(n)$-module spectrum $M$, there exists a $k$ with $\frak{m}^kM_*X =0$. Here $E(n)$ is the $n$-th Johnson-Wilson theory. Then, by~\cite[Proposition 6.7]{devhop04}, $E_2^{\ast,\ast} = H^\ast_c(\G_n,E_*X)$. Note that $E$ is an $E(n)$-module spectrum and so the proof of~\cite[Proposition 6.11]{devhop04} implies that $E \wedge X$ is $K(n)$-local, so that $E_*X = \mE X$. Since $E$ is an $E(n)$-module spectrum, $\mE X$ is a bounded $\frak{m}$-torsion module and so~\Cref{thm:COR} applies. 
\end{enumerate}
\end{rem}
\section{The category of Morava modules}
In this section we will show how~\Cref{cor:e2htfp} allows us to easily extend a result originally proved in~\cite{GHMR} for finite subgroups of $\G_n$ to arbitrary closed subgroups. First we need a definition.
\begin{definition}~\cite{GHMR}
    A \emph{Morava module} is a complete $E_*$-module $M$ equipped with a continuous $\G_n$-action such that, if $g \in \G_n,a \in E_*$ and $x\in M$, then 
    \[
g(ax) = g(a)g(x). 
    \]
    \end{definition}
    We denote the category of Morava modules by $\mathcal{EG}_n$. Here a homomorphism $\phi:M \to N$ of Morava modules is a continuous (with respect to the $\frak{m}$-adic topology) $E_*$-module homomorphism such that
    the following diagram commutes, where $g \in \G_n$:
  \[ \xymatrix{
  M \ar[r]^{\phi} \ar[d]_g & N \ar[d]^g\\
  M \ar[r]_{\phi} & N.
  }\]
   For example, if $X$ is any spectrum such that $\mE X$ is either finitely-generated, pro-free, or has bounded $\frak{m}$-torsion, then $\mE X$ is a complete $E_*$-module (by~\Cref{prop:completemodules}) and the $\G_n$-action on $E$ defines a compatible action on $\mE X$. This gives $\mE X$ the structure of a Morava module. The category $\mathcal{EG}_n$ is a symmetric monoidal category; given Morava modules $M$ and $N$ their monoidal product is given by $M \htimes_{E_*} N$ with the diagonal $\G_n$-action. 

   A homomorphism of complete $E_*$-modules is a homomorphism of $E_*$-modules that is continuous with respect to the $\frak{m}$-adic topology. However, it turns out that any homomorphism between complete $E_*$-modules is automatically continuous.  We learnt this from Charles Rezk, who also provided the following proof. 
\begin{lemma}\label{lem:cont}
Let $f: M \to N$ be an $E_*$-module homomorphism between complete $E_*$-modules. Then $f$ is continuous with respect to the $\frak{m}$-adic topology. 
\end{lemma}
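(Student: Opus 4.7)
The plan is to verify continuity directly from the definition, reducing to a check at the origin and then exploiting $E_*$-linearity.

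Since $f$ is additive, it suffices to prove continuity at $0 \in M$. A basis of open neighborhoods of $0 \in N$ in the $\mathfrak{m}$-adic topology is given by the submodules $\mathfrak{m}^k N$ for $k \geq 0$, so continuity of $f$ at $0$ amounts to showing that for every $k \geq 0$ there exists $j \geq 0$ with $f(\mathfrak{m}^j M) \subseteq \mathfrak{m}^k N$.

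The key observation is that this holds tautologically with $j = k$, by $E_*$-linearity: any element of $\mathfrak{m}^k M$ has the form $\sum_i a_i m_i$ with $a_i \in \mathfrak{m}^k$ and $m_i \in M$, and
\[
f\!\left(\sum_i a_i m_i\right) \;=\; \sum_i a_i\, f(m_i) \;\in\; \mathfrak{m}^k N.
\]
Hence $f(\mathfrak{m}^k M) \subseteq \mathfrak{m}^k N$, so $\mathfrak{m}^k M \subseteq f^{-1}(\mathfrak{m}^k N)$, and $f^{-1}(\mathfrak{m}^k N)$ is an open neighborhood of $0$ in $M$.

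There is essentially no obstacle: the argument uses only $E_*$-linearity and makes no real use of completeness of $M$ or $N$. Completeness enters only in spirit, to ensure that $\mathfrak{m}$-adic continuity is the natural condition to track (for instance, so that a morphism is determined by its reductions $M/\mathfrak{m}^k M \to N/\mathfrak{m}^k N$); the content of the lemma is the slightly surprising fact that, in the presence of $E_*$-linearity, one does not need to impose continuity as an extra axiom when defining morphisms in the category of complete $E_*$-modules.
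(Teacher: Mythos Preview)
Your proof is correct and follows essentially the same approach as the paper: both hinge on the observation that $E_*$-linearity forces $f(\mathfrak{m}^k M) \subseteq \mathfrak{m}^k N$, from which openness of $f^{-1}(\mathfrak{m}^k N)$ is immediate. The paper phrases the conclusion by noting that $f^{-1}(\mathfrak{m}^k N)$ is a union of $\mathfrak{m}^k M$-cosets rather than reducing to continuity at $0$, but this is a cosmetic difference, and your remark that completeness plays no genuine role is accurate.
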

\begin{proof}
    The map $f$ is an $E_*$-module homomorphism and so $f(\mathfrak{m}^k M)$ is a subset of $\mathfrak{m}^k N$ for any $k \ge 0$; thus $\mathfrak{m}^kM$ is a subset of $f^{-1}(\mathfrak{m}^kN)$. Therefore $f^{-1}(\mathfrak{m}^kN)$ is a union of $\mathfrak{m}^kM$-cosets. It follows from the fact that $\mathfrak{m}^kM$ is open that $f^{-1}(\mathfrak{m}^kN)$ is open in the $\mathfrak{m}$-adic topology.
\end{proof}

    Let $\{U_i\}$ be a system of open normal subgroups of $\G_n$ such that $\bigcap_i U_i = \{ e \}$ and $\G_n = \varprojlim_i \G_n/U_i$. Then we define $E_*[\![\G_n]\!] = \varprojlim_i E_*[\G_n/U_i]$, the completed group ring, with diagonal $\G_n$-action. If $H$ is a closed subgroup of $\G_n$, then we define $E_*[\![\G_n/H]\!]$ in a similar way, with diagonal $\G_n$-action.   With this in mind there is the following result. 
    \begin{prop}~\cite[Theorem 2.7]{GHMR}\label{prop:ghmr2.7}
     Let $H_1$ and $H_2$ be closed subgroups
of $\G_n$ and suppose that $H_2$ is finite. Then there is an isomorphism 
\[
E_\ast[\![\G_n/H_1]\!]^{H_2} \mathop{\longrightarrow}^{\cong} 
\Hom_{\mathcal{EG}_n}(\mE E^{hH_1},\mE E^{hH_2})
\]

    \end{prop}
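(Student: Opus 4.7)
The plan is to reduce the statement to a continuous cohomology computation via~\Cref{cor:e2htfp}, and then invoke duality to identify the result with $E_*[\![\G_n/H_1]\!]^{H_2}$. First, by~\Cref{rem:ktheory}, $\mE E^{hH_1}$ is pro-free for any closed subgroup $H_1\subset\G_n$, so we may apply~\Cref{cor:e2htfp} with $X = E^{hH_1}$ and $F = H_2$. Together with~\Cref{lem:ext0} this yields
\[
\Hom_{\wComod_{\mE E}}(\mE E^{hH_1},\mE E^{hH_2}) \simeq H_c^0(H_2, E^{-*} E^{hH_1}) = (E^{-*} E^{hH_1})^{H_2}.
\]

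Next, I would verify that $\Hom_{\wComod_{\mE E}}$ and $\Hom_{\mathcal{EG}_n}$ agree on this pair of objects. Under the identification $\mE E \simeq \Hom^c(\G_n, E_*)$ used in the proof of~\Cref{thm:COR}, an $\mE E$-coaction on a pro-free $L$-complete $E_*$-module is equivalent data to a continuous semilinear $\G_n$-action; in both categories morphisms are then $E_*$-linear equivariant maps, and by~\Cref{prop:completemodules} together with~\Cref{lem:cont} such maps are automatically continuous for the $\mathfrak{m}$-adic topology, so the two Hom sets coincide.

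Finally, I would identify $(E^{-*}E^{hH_1})^{H_2}$ with $E_*[\![\G_n/H_1]\!]^{H_2}$ as graded $E_*$-modules with $H_2$-action. Applying~\Cref{lem:UCT} with $Y = S^0$ gives a natural isomorphism
\[
E^{-*} E^{hH_1} \simeq \Hom_{E_*}(\mE E^{hH_1}, E_*),
\]
and then the Devinatz--Hopkins identification $\mE E^{hH_1} \simeq \Hom^c(\G_n/H_1, E_*)$, combined with continuous $E_*$-linear duality (promoted from the continuous setting by~\Cref{lem:cont}), realises the right-hand side as $E_*[\![\G_n/H_1]\!]$ compatibly with the $\G_n$-action. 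Taking $H_2$-fixed points then yields the isomorphism stated in the proposition, and unwinding shows it is the natural map.

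I expect the main obstacle to be this duality step: one must show that the $E_*$-linear dual of $\Hom^c(\G_n/H_1, E_*)$ is $E_*[\![\G_n/H_1]\!]$. This is immediate for the finite quotients $\G_n/(H_1 U_i)$ using~\Cref{lem:completefg}, but extending through the pro-finite inverse limit requires carefully commuting $\Hom_{E_*}$ with $\varinjlim/\varprojlim$. Here one leverages that both sides are $\mathfrak{m}$-adically complete pro-free $E_*$-modules (by~\Cref{prop:completemodules} and~\Cref{rem:ktheory}) and that every $E_*$-module map between them is automatically continuous by~\Cref{lem:cont}, so that the required universal properties hold without extra topological hypotheses.
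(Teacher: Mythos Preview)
The paper does not prove \Cref{prop:ghmr2.7}; it is quoted from \cite[Theorem 2.7]{GHMR} as motivation, and the paper's own contribution is the subsequent \Cref{prop:GHMR}, which removes the hypothesis that $H_2$ be finite. Your argument never uses finiteness of $H_2$, so what you have actually sketched is a proof of \Cref{prop:GHMR}, and indeed your outline matches the paper's proof of that generalisation almost step for step: apply \Cref{cor:e2htfp} and \Cref{lem:ext0}, identify $\Hom$ in $L$-complete comodules with $\Hom$ in Morava modules (the paper isolates this as \Cref{cor:homequivalence}, proved via the preceding proposition), and then identify $E^{-*}E^{hH_1}$ with $E_*[\![\G_n/H_1]\!]$.

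The one substantive divergence is in this last identification. The paper simply cites \cite{hovoperations,devhop04} for $E^{-*}E^{hH_1}\simeq E_*[\![\G_n/H_1]\!]$ (with the $\G_n$-action), whereas you propose to derive it by applying \Cref{lem:UCT} and then dualising the Devinatz--Hopkins description $\mE E^{hH_1}\simeq \Hom^c(\G_n/H_1,E_*)$. Your route is plausible but, as you yourself flag, the passage from finite quotients to the profinite limit is exactly the delicate point; the paper sidesteps this entirely by invoking the known computation. So your approach is correct and essentially the paper's, but with an unnecessary detour at the end that the paper handles by citation.
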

We will in fact see that this holds more generally whenever $H_2$ is a closed subgroup of $\G_n$. 

We will need the following relationship between homomorphisms of Morava modules and $L$-complete comodules. 
\begin{prop}
	If $M$ and $N$ are both Morava modules and $L$-complete comodules, such that the underlying $L$-complete $E_*$-modules are pro-free, then 
	\[
\Hom_{\mathcal{EG}_n}(M,N) \simeq \Hom_{\wComod_{\mE E}}(M,N).
	\]
\end{prop}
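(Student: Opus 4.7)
The plan is to translate an $L$-complete $\mE E$-comodule structure into a continuous $\G_n$-action via the identification $\mE E \simeq \Hom^c(\G_n,E_*)$ used in the proof of \Cref{thm:COR}. Since $M$ is pro-free it is $\mathfrak{m}$-adically complete by \Cref{prop:completemodules}, and $\mE E$ is pro-free by~\cite[Theorem 2.6]{hovoperations}. Therefore \Cref{lemma:lcompletetotensor} gives a natural isomorphism
\[
\mE E \btimes_{E_*} M \;\simeq\; \mE E \htimes_{E_*} M \;\simeq\; \Hom^c(\G_n,E_*) \htimes_{E_*} M \;\simeq\; \Hom^c(\G_n,M),
\]
where the last step uses that $M$ is complete in the $\mathfrak{m}$-adic topology (compare the computation in the proof of \Cref{thm:COR}).

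Under this identification, the coaction $\psi_M : M \to \mE E \btimes_{E_*} M$ becomes a map $\widetilde{\psi}_M : M \to \Hom^c(\G_n,M)$, and I would define $g \cdot m := \widetilde{\psi}_M(m)(g)$. The counit axiom $(\epsilon \btimes 1)\psi_M = \mathrm{id}$ translates to $e\cdot m = m$, while coassociativity $(\Delta \btimes 1)\psi_M = (1 \btimes \psi_M)\psi_M$ becomes the associativity $g_1 \cdot(g_2 \cdot m) = (g_1 g_2)\cdot m$, because the comultiplication on $\mE E$ is dual to the multiplication on $\G_n$. Continuity of the action is automatic since $\widetilde{\psi}_M(m) \in \Hom^c(\G_n,M)$, and $E_*$-linearity of $\psi_M$ combined with the twisted multiplication on $\mE E$ (equivalently, the $\G_n$-action on $E_*$) yields the Morava module compatibility $g(am) = g(a)\,g(m)$. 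Thus on the level of objects, an $L$-complete $\mE E$-comodule structure on a pro-free $M$ is the same as a Morava module structure.

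For morphisms, a map $\phi : M \to N$ is a comodule morphism iff $(1 \btimes \phi)\psi_M = \psi_N \phi$; under the identification above this is exactly the condition $\phi(g \cdot m) = g \cdot \phi(m)$, i.e. $\G_n$-equivariance. Continuity of $\phi$ with respect to the $\mathfrak{m}$-adic topology, required in the definition of $\mathcal{EG}_n$, is automatic by \Cref{lem:cont} since $M$ and $N$ are complete. Putting these two correspondences together yields the claimed identification $\Hom_{\mathcal{EG}_n}(M,N) \simeq \Hom_{\wComod_{\mE E}}(M,N)$.

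The principal obstacle is making the identification $\mE E \btimes_{E_*} M \simeq \Hom^c(\G_n,M)$ genuinely natural in $M$ and, more importantly, compatible with the algebra and coalgebra structures on $\mE E$ so that coassociativity really becomes the action axiom. This ultimately rests on the Devinatz--Hopkins/Strickland description of $\mE E$ as the ring of continuous functions $\Hom^c(\G_n,E_*)$ with the dualised group-theoretic structure maps, and once this dictionary is in place the rest of the argument is largely bookkeeping.
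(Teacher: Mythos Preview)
Your proposal is correct and follows essentially the same route as the paper: both use the identification $\mE E \btimes_{E_*} M \simeq \mE E \htimes_{E_*} M \simeq \Hom^c(\G_n,M)$ via \Cref{lemma:lcompletetotensor} and the Devinatz--Hopkins/Strickland description of $\mE E$, translate the coaction into a $\G_n$-action by evaluation, invoke \Cref{lem:cont} for automatic continuity, and then observe that the comodule-morphism condition is exactly $\G_n$-equivariance. If anything, you are slightly more explicit than the paper in spelling out that counit and coassociativity correspond to the identity and associativity axioms for the action, and in flagging the compatibility of the Hopf algebroid structure on $\mE E$ with the group structure on $\G_n$ as the point requiring care; the paper leaves this implicit.
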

\begin{proof}
	 Let $\phi:M \to N$ be a homomorphism of Morava modules. 	Note that $M$ and $N$ are complete, and hence also $L$-complete, and so $\phi$ defines a morphism in $\Mod_{E_*}$. We wish to show that this is in fact a comodule homomorphism. Let $\psi_M:M \to \Hom^c(\G_n,M) \simeq \mE E \htimes_{E_*} M$ be the adjoint of the $\G_n$-action map, and similarly for $\psi_N$. By~\Cref{lemma:lcompletetotensor} $\mE E \htimes_{E_*} M \simeq \mE E \btimes_{E_*} M$, and equivariance of $\phi$ implies that the following diagram commutes
	\[ \xymatrix{
	M \ar[r]^-{\phi} \ar[d]_{\psi_M} &  N\ar[d]^{\psi_N}\\
	\mE E \btimes_{E_*} M \ar[r]_-{\text{id} \btimes \phi} & \mE E \btimes_{E_*} N,
	}
	\]
	so that $\phi$ defines a morphism of comodules. 

Conversely, suppose that we are given an $L$-complete comodule homomorphism $\Phi: M \to N$. Since $M$ and $N$ are complete~\Cref{lem:cont} implies that $\Phi$ is a homomorphism of complete $E_*$-modules. Given the structure map $\psi_M$ we define a $\G_n$-action on $M$ using the retract diagram
\[
M \xr{\psi_M} \mE E \btimes_{E_*} M \simeq \Hom^c(\G_n,E_*) \htimes_{E_*} M \xr{\text{ev}(g) \htimes \text{id}} M,  
\]
where $\text{ev}(g):\Hom^c(\G_n,E_*) \to E_*$ is the evaluation map at $g \in \G_n$. The fact that $\Phi$ is a $L$-complete comodule homomorphism shows that, with this $\G_n$-action, $\Phi$ is in fact a morphism of Morava modules. 

These constructions define maps $\Hom_{\mathcal{EG}_n}(M,N) \to \Hom_{\wComod_{\mE E}}(M,N)$ and vice-versa, and it is not hard to see that these are inverse to each other. 
\end{proof}
\begin{cor}\label{cor:homequivalence}
	If $\mE X$ and $\mE Y$ are pro-free, then 
		\[
\Hom_{\mathcal{EG}_n}(\mE X,\mE Y) \simeq \Hom_{\wComod_{\mE E}}(\mE X,\mE Y).
	\]
\end{cor}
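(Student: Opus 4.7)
The plan is to derive this corollary directly from the preceding proposition by verifying that $\mE X$ and $\mE Y$ satisfy all its hypotheses. Specifically, the preceding proposition requires that both modules be simultaneously Morava modules and $L$-complete $\mE E$-comodules, with pro-free underlying $L$-complete $E_*$-module structures. The pro-freeness hypothesis is given, so the task reduces to producing the two structures compatibly.

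First I would invoke \Cref{prop:completemodules} to observe that pro-free $\mE X$ and $\mE Y$ are automatically complete in the $\frak{m}$-adic topology. Combined with the standard fact (discussed just after the definition of Morava modules, and arising from the $\G_n$-action on $E$ in the stable homotopy category) that the $\G_n$-action on $E$ induces a compatible continuous $\G_n$-action on $\mE X$ and $\mE Y$, this endows both modules with the structure of Morava modules. Next, since pro-freeness is one of the three sufficient conditions in \Cref{lem:comodules}, both $\mE X$ and $\mE Y$ are $L$-complete $\mE E$-comodules.

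Having verified both structures on both modules, and with pro-freeness of the underlying $L$-complete $E_*$-modules given by hypothesis, the preceding proposition applies verbatim with $M=\mE X$ and $N=\mE Y$, yielding the claimed isomorphism. The main point that makes this a corollary rather than a lemma is that all the substantive technical work — reconciling the Morava-module $\G_n$-action with the comodule coaction via the identification $\mE E \htimes_{E_*} M \simeq \mE E \btimes_{E_*} M$ from \Cref{lemma:lcompletetotensor}, and the automatic continuity of $E_*$-module maps between complete modules from \Cref{lem:cont} — has already been carried out in the proof of the preceding proposition. So no new obstacle arises; the only thing to check is that we have enough hypotheses to feed into the machinery, which we do.
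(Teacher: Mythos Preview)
Your proposal is correct and follows essentially the same route as the paper: verify that pro-freeness of $\mE X$ and $\mE Y$ supplies both the Morava module structure (via $\frak{m}$-adic completeness from \Cref{prop:completemodules} together with the induced $\G_n$-action) and the $L$-complete $\mE E$-comodule structure (via \Cref{lem:comodules}), and then apply the preceding proposition. If anything, your citations are more precise than the paper's one-line proof, which attributes the comodule structure to \Cref{prop:completemodules} when \Cref{lem:comodules} is really the relevant input.
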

\begin{proof}
	The condition that $\mE X$ and $\mE Y$ are pro-free ensures that they are Morava modules; they are also $L$-complete $\mE E$-comodules by~\Cref{prop:completemodules}.
\end{proof}
We can now easily derive our version of the Goerss--Henn--Mahowald--Rezk result. 
\begin{prop}\label{prop:GHMR}
        Let $H_1$ and $H_2$ be closed subgroups
of $\G_n$. Then there is an isomorphism 
\[
E_\ast[\![\G_n/H_1]\!]^{H_2} \mathop{\longrightarrow}^{\cong} 
\Hom_{\mathcal{EG}_n}(\mE E^{hH_1},\mE E^{hH_2}).
\]
\end{prop}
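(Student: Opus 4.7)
The plan is to chain together three identifications already developed in this paper: the Morava-module-to-$L$-complete-comodule Hom comparison in~\Cref{cor:homequivalence}, the $\wExt^0=\Hom$ identification in~\Cref{lem:ext0}, and the description of the $E_2$-page of the $K(n)$-local $E$-Adams spectral sequence by continuous cohomology in~\Cref{cor:e2htfp}.

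First, since $\mE E^{hH_1}$ and $\mE E^{hH_2}$ are both pro-free by~\Cref{rem:ktheory}, \Cref{cor:homequivalence} supplies a natural isomorphism
\[
\Hom_{\mathcal{EG}_n}(\mE E^{hH_1}, \mE E^{hH_2}) \cong \Hom_{\wComod_{\mE E}}(\mE E^{hH_1}, \mE E^{hH_2}).
\]
\Cref{lem:ext0} then rewrites the right-hand side as $\wExt^{0,\ast}_{\mE E}(\mE E^{hH_1}, \mE E^{hH_2})$, and because $\mE E^{hH_1}$ is pro-free I can apply~\Cref{cor:e2htfp} with $X = E^{hH_1}$ and $F = H_2$ to rewrite this further as
\[
H_c^0(H_2, E^{-\ast}E^{hH_1}) = (E^{-\ast}E^{hH_1})^{H_2}.
\]

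To conclude, I would invoke the identification $E^{-\ast}E^{hH_1} \cong E_\ast[\![\G_n/H_1]\!]$ of Morava modules, valid for any closed subgroup $H_1 \subseteq \G_n$; this identification underlies the definitions of~\cite[\S 2]{GHMR} and follows from the cosimplicial model of $E^{hH_1}$ from~\cite{devhop04}, specialising for $H_1=\{e\}$ to the classical description $\pi_\ast F(E,E)\cong E_\ast[\![\G_n]\!]$ of the ring of cohomology operations. Composing the chain of isomorphisms then yields the desired map, and by construction it agrees, in the case that $H_2$ is finite, with the isomorphism of~\Cref{prop:ghmr2.7}.

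The only non-formal point in the argument is the bookkeeping of actions: one must verify that every step above is $H_2$-equivariant for the correct Morava action, so that the output really lands on the $H_2$-fixed points of $E_\ast[\![\G_n/H_1]\!]$ under left translation. This reduces to naturality of~\Cref{cor:e2htfp} in the variable $X$ (which is clear from its construction as the degree-zero term of an Adams-type spectral sequence), together with functoriality of the Devinatz--Hopkins identification in $H_1$; no genuinely new input beyond the results already developed in the paper should be required.
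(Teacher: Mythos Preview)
Your proposal is correct and follows essentially the same route as the paper: both arguments chain together \Cref{cor:homequivalence}, \Cref{lem:ext0}, \Cref{cor:e2htfp}, and the identification $E^{-\ast}E^{hH_1}\cong E_\ast[\![\G_n/H_1]\!]$ (which the paper attributes to \cite{hovoperations,devhop04}), differing only in the order the steps are invoked. Your closing remarks on equivariance are a reasonable amplification of the paper's one-line assertion that the identification ``respects the $\G_n$-actions on both sides.''
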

\begin{proof}
By~\Cref{cor:e2htfp} we have 
\[
\wExt^{s,\ast}_{\mE E}(\mE E^{hH_1},\mE E^{hH_2}) \simeq H_c^s(H_2,E^{-\ast} E^{hH_1}).
\]
From the results of~\cite{hovoperations,devhop04} it can be deduced that $E^{-*}E^{hH_1} \simeq E_*[\![\G_n/H_1]\!]$ for any closed subgroup $H_1 \subset \G_n$, and that this isomorphism respects the $\G_n$-actions on both sides. Using~\Cref{lem:ext0} 
\[
\begin{split}
\Hom_{\wComod_{\mE E}}(\mE E^{hH_1},\mE E^{hH_2}) &\simeq H_c^0(H_2,E_{\ast}[\![\G_n/H_1]\!])\\& \simeq E_{*}[\![\G_n/H_1]\!]^{H_2}.
\end{split}
\]
Since $\mE E^{hH_1}$ and $\mE E^{hH_2}$ are pro-free,~\Cref{cor:homequivalence} implies that
\[
\begin{split}
   \Hom_{\wComod_{\mE E}}(\mE E^{hH_1},\mE E^{hH_2})  \simeq \Hom_{\mathcal{EG}_n} (\mE E^{hH_1},\mE E^{hH_2}),
\end{split}
\]
so that
\[
\Hom_{\mathcal{EG}_n} (\mE E^{hH_1},\mE E^{hH_2}) \simeq E_*[\![\G_n/H_1]\!]^{H_2}
\]
as required.
\end{proof}
\begin{rem}
    Let $H$ be a topological group and assume that $R$ is an $H$-spectrum and $X = \lim_i X_i$ is an inverse limit of a sequence of finite discrete $H$-sets $X_i$, such that $X$ has a continuous $H$-action. Following~\cite{GHMR} we define the $H$-spectrum
\[
R[\![X]\!] = \holim_i R \wedge (X_i)_+,
\]
with the diagonal $H$-action. In~\cite{behdav} Behrens and Davis show that if $H_1$ and $H_2$ are as above then there is an equivalence
    \[
F(E^{hH_1},E^{hH_2}) \simeq E[\![\G_n/H_1]\!]^{hH_2}.
    \]
This was originally proved for $H_2$ finite in~\cite{GHMR}. Combined with~\Cref{prop:GHMR} it is easy to see that there is a commutative diagram
\[
\begin{tikzcd}
    \pi_* E[\![\G_n/H_1]\!]^{hH_2} \arrow{r} \arrow[swap]{d}{\simeq} & \left( E_*[\![\G_n/H_1]\!]\right)^{H_2} \arrow{d}{\simeq} \\
    \pi_*F(E^{hH_1},E^{hH_2}) \arrow{r} & \Hom_{\mathcal{EG}_n}(\mE E^{hH_1},\mE E^{hH_2}),
\end{tikzcd}
\] 
for $H_1,H_2$ closed subgroups of $\G_n$. Again this is proved in~\cite{GHMR} under the additional assumption that $H_2$ is finite.
\end{rem}

\section{The $E_1$ and $K(1)$-local $E_1$-Adams spectral sequences}
Since $E_*E$ is a flat $E_*$-module we have the $E$-Adams spectral sequence (see, for example~\cite{hovsad})
\[
E_2^{s,t} = \Ext^{s,t}_{E_*E}(E_*,E_*X) \Rightarrow \pi_*L_nX.
\]
This is a spectral sequence of $\Zp$-modules and our goal in this section is to use the derived functor of $p$-completion on $\Zp$-modules to construct a spectral sequence abutting to the $E_2$-term of the $K(n)$-local $E$-Adams spectral sequence. 

Unfortunately our proof only works when $n=1$ and $p$ is an arbitrary prime.  We shall see that, for a spectrum $X$, the spectral sequence naturally carries copies of $\Q/\Z_{(p)}$ in $\Ext^{\ast,\ast}_{E_*E}(E_*,E_*X)$ to copies of $\Z_p$ in $H^{\ast}_c(\G_n,\mE X)$. Already at height 2, for primes greater than or equal to 5, the calculations of~\cite{shimyabe} imply that there are 3 copies of $\Q/\Z_{(p)}$ which lie in bidegree $(4,0),(4,0)$ and $(5,0)$, whilst in $H^*_c(\G_2,(E_2)_*)$ there are copies of $\Z_p$ in bidgrees $(0,0),(1,0)$ and $(3,0)$. The grading on the spectral sequence we construct will imply that there is no possible class that could give rise to the copy of $\Z_p$ in bidegree $(1,0)$. If one accepts the chromatic splitting conjecture~\cite{hovbous} then an analogue of our spectral sequence cannot exist at all when $n \ge 2$.

\textbf{Note:} From now on, unless otherwise stated, it is implicit that $E$ refers to Morava $E$-theory at height 1 only. 

The reason that the spectral sequence exists when $n=1$ is due to the fact that $E_* \simeq \Z_p[u^{\pm 1}]$. As Hovey shows in~\cite[Lemma 3.1]{hovey08fil}, given a graded $E_*$-module $M$, there is an isomorphism $(L_0M)_k \simeq L_0M_k$ where the second $L_0$ is taken in the category of $\Z_p$-modules. A similar result holds for completion with respect to $\Zp$-modules. We will often use this implicitly to pass between ungraded $\Zp$-completion and graded $E_*$-completion. 
\begin{theorem}\label{thm:specseq}

          If $E_*X$ is a projective $E_*$-module and $E_*Y$ is a flat $E_*$-module, then there is a spectral sequence
       \[
E_2^{i,s} = L_i \Ext_{E_*E}^{s,t}(E_*X,E_*Y) \Rightarrow \wExt_{\mE E}^{s-i,t}(\mE X,\mE Y),
       \]
       where $L_i$ is taken in the category of $\Zp$-modules. 
\end{theorem}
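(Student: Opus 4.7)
The plan is to build a hyper-derived spectral sequence by applying the completion functor to the classical cobar complex computing $\Ext_{E_*E}$ and identifying the result with the cobar complex computing $\wExt_{\mE E}$. Throughout, I work with graded $E_*$-modules; at $n=1$ the graded $E_*$-completion agrees with degree-wise $\Zp$-completion by~\cite[Lemma 3.1]{hovey08fil}, so the $L_i$ appearing in the statement can equivalently be interpreted either way.

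First, let $C^\bullet$ be the cobar complex
\[
C^s = \Hom_{E_*}(E_*X,\ (E_*E)^{\otimes s} \otimes_{E_*} E_*Y)
\]
with standard cobar differential, so $H^s(C^\bullet) = \Ext^{s,*}_{E_*E}(E_*X, E_*Y)$, valid since $E_*E$ is flat over $E_*$. Each tensor product $(E_*E)^{\otimes s} \otimes_{E_*} E_*Y$ is flat by the hypothesis on $E_*Y$, so~\Cref{cor:homlcompletion} (applied to the projective module $E_*X$ and the flat module $(E_*E)^{\otimes s} \otimes_{E_*} E_*Y$) gives $L_i C^s = 0$ for $i>0$ and
\[
L_0 C^s \simeq \Hom_{\Mod_{E_*}}(\mE X,\ L_0((E_*E)^{\otimes s} \otimes_{E_*} E_*Y)).
\]

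Next, I would identify $L_0 C^\bullet$ with the $\wExt_{\mE E}$-cobar complex. Iterating the identity $L_0(A \otimes_{E_*} B) \simeq L_0 A \btimes_{E_*} L_0 B$ for flat $A, B$, together with $\mE E \simeq L_0 E_*E$ and $\mE Y \simeq L_0 E_*Y$ (both obtained from the collapsing of the $L$-complete spectral sequence on flat modules), yields $L_0((E_*E)^{\otimes s} \otimes_{E_*} E_*Y) \simeq (\mE E)^{\btimes s} \btimes_{E_*} \mE Y$. The adjunction of~\Cref{lem:adjoint} then rewrites $L_0 C^s$ as
\[
\Hom_{\wComod_{\mE E}}(\mE X,\ (\mE E)^{\btimes(s+1)} \btimes_{E_*} \mE Y),
\]
which is the $s$-th term in the cobar complex computing $\wExt^{*,*}_{\mE E}(\mE X, \mE Y)$. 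The cobar differentials match under these identifications because both complexes inherit their differentials from the (co)multiplications and coactions, which are compatible with $L_0$. Hence $H^s(L_0 C^\bullet) \simeq \wExt^{s,*}_{\mE E}(\mE X, \mE Y)$.

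Finally, since each $C^s$ satisfies $L_i C^s = 0$ for $i > 0$, the naive complex $L_0 C^\bullet$ represents the total left-derived functor of completion applied to $C^\bullet$. The standard Cartan--Eilenberg double-complex spectral sequence then takes the form
\[
E_2^{i,s} = L_i H^s(C^\bullet) \Rightarrow H^{s-i}(L_0 C^\bullet),
\]
which, after substituting the identifications above, is the spectral sequence of the theorem. The main obstacle is the identification step: verifying that iterated $L_0$ commutes with the iterated tensor product through the cobar complex, and that the resulting complex structure is compatible with the cobar differentials and the adjunction so as to recover exactly the $L$-complete cobar complex of Section~2.
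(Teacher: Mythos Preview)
Your proposal is correct and follows essentially the same route as the paper: both set up the uncompleted cobar complex $N^s=\Hom_{E_*}(E_*X,(E_*E)^{\otimes s}\otimes_{E_*}E_*Y)$, argue that each term is tame (the paper via flatness of $N^s$, you via \Cref{cor:homlcompletion}), and then invoke the hyperderived spectral sequence for $L_0$ on a termwise-tame cochain complex. The paper cites this spectral sequence as \cite[Proposition 8.7]{rezk2013} rather than calling it Cartan--Eilenberg, and handles your flagged ``main obstacle'' about iterated $L_0$ of tensor products by appealing to the symmetric monoidal structure of \cite[Corollary A.7]{hs99}, but otherwise the arguments coincide.
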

\begin{rem}
  The assumption that $E_*X$ is projective ensures that $\Ext_{E_*E}^{\ast,\ast}(E_*X,E_*Y)$ can be computed by a relative injective resolution of $E_*Y$. Additionally, if $E_*X$ is projective, the spectral sequence of \cite[Theorem 2.3]{hovey08fil} collapses, so $\mE X \cong  L_0E_*X$ is pro-free by~\cite[Proposition A.15]{barthel2013completed}. 
\end{rem}
\begin{proof}
   Let $M^\ast$ be the cobar complex with $M^s = E_*E^{\otimes (s+1)} \otimes_{E_*} E_*Y$. Then 
   \[
   \begin{split}
 \Ext_{E_*E}^{\ast,\ast}(E_*X,E_*Y) &= H^*(\Hom_{E_*E}(E_*X,E_*E^{\otimes (\ast+1)} \otimes_{E_*} E_*Y))    \\
 &\simeq H^*(\Hom_{E_*}(E_*X,E_*E^{\otimes \ast} \otimes_{E_*} E_*Y))
    \end{split}
   \]
For brevity, we denote $\Hom_{E_*}(E_*X,E_*E^{\otimes \ast} \otimes_{E_*} E_*Y)$ by $N^\ast$. Since $E_*E$ and $E_*Y$ are flat $E_*$-modules, so is the iterated tensor product $E_*E^{\otimes \ast} \otimes_{E_*} E_*Y$; since $E_*X$ is projective, and $E_*$ is Noetherian, each $N^s$ is flat, and hence tame. Under such a condition there is a spectral sequence\footnote{Note that we switch from a chain complex to a cochain complex, which accounts for the shift in grading in the abutment.}~\cite[Proposition 8.7]{rezk2013} 
   \[
E_2^{i,s} = L_i(H^{s}(N^\ast)) \simeq L_i\Ext_{E_*E}^{s,\ast}(E_*X,E_*Y) \Rightarrow H^{s-i}(L_0(N^\ast)).
   \]

To identify the abutment use the spectral sequence of~\cite[Theorem 2.3]{hovey08fil} to see that $L_0(E_*X) = \mE X$ and $L_0(E_*Y) = \mE Y$, since both $E_*X$ and $E_*Y$ are flat. It also follows from the symmetric monoidal structure on $L$-complete $E_*$-modules~\cite[Corollary A.7]{hs99} that $L_0(E_*E^{\otimes \ast} \otimes_{E_*} E_*Y) = L_0(E_*E)^{\btimes \ast} \btimes_{E_*} L_0(E_*Y)$. Applying~\Cref{cor:homlcompletion} we see that 
\[
\begin{split}
L_0(N^\ast) &= \Hom_{\Mod_{E_*}}(\mE X,\mE E^{\btimes \ast} \btimes_{E_*} \mE Y) \\
  &\simeq \Hom_{\wComod_{\mE E}}(\mE X,\mE E^{\btimes (\ast+1)} \btimes_{E_*} \mE Y).
\end{split}
\]
The cohomology of the latter is precisely $\wExt^{\ast,\ast}_{\mE E}(\mE X,\mE Y)$. 
\end{proof}
\begin{rem}
  This spectral sequence can also be obtained as a Grothendieck spectral sequence. Again assuming that $E_*X$ is projective, consider the following functors, and their derived functors:
    \[
G: \Hom_{E_*E}(E_*X,-), \quad R^tG: \Ext^t_{E_*E}(E_*X,-) 
\]
from $E_*E$-comodules with flat underlying $E_*$-module to $\Zp$-modules, and
\[
F: L_0(-), \quad L^tF : L_t(-),                                         
\]
from $\Zp$-modules to $\Z_p$-modules. Then
\[
FG(-) = L_0\Hom_{E_*E}(E_*X,-).
\]
Let $E_*E \otimes_{E_*} N$ be an extended $E_*E$-comodule, where $N$ is a flat $E_*$-module; this implies $E_*E \otimes_{E_*} N$ is still flat. Then, for $s>0$,
\[
\begin{split}
  L^sF(G(E_*E \otimes_{E_*} N)) & \simeq L_s(\Hom_{E_*E}(E_*X,E_*E \otimes_{E_*} N)) \\
  & \simeq L_s \Hom_{E_*}(E_*X,N) = 0. \\
\end{split}
\]
by~\Cref{cor:homlcompletion}. This implies that the Grothendieck spectral sequence exists. To identify the abutment we just need to identify the derived functors of $FG$. Once again we can use the cobar resolution $M \to E_*E \otimes_{E_*} M \to \cdots $, where $M$ is an $E_*E$-comodule that is flat as an $E_*$-module. Then 
\[
\begin{split}
R^sFG(M) &= H^s(L_0\Hom_{E_*E}(E_*X,(E_*E)^{\otimes (\ast+1)}\otimes_{E_*}M)) \\
  &\simeq H^s(L_0\Hom_{E_*}(E_*X,(E_*E)^{\otimes \ast}\otimes_{E_*}M)) \\
  & \simeq H^s(\Hom_{\Mod_{E_*}}(\mE X,\mE E^{\btimes \ast} \btimes_{E_*} L_0M )) \\
  & \simeq H^s(\Hom_{\wComod_{\mE E}}(\mE X,\mE E^{\btimes (\ast+1)}\btimes_{E_*} L_0M) ).
\end{split}
\]
As we have seen previously this identifies $R^sFG(M)$ with $\wExt^{s,\ast}_{\mE E}(\mE X,L_0M)$.
\end{rem}
\subsection{Height 1 calculations}
As an example we will show how the calculation of $H^*_c(\G_1,E_*)$ follows from the corresponding calculation of $\Ext^{\ast,\ast}_{E_*E}(E_*,E_*)$. We work at the prime 2 since the calculations are more interesting here, due to the presence of 2-torsion in $\G_1 = \Z_2^\times \simeq \Z_2 \times \Z/2$. We first need the following lemma that relates the $E(n)$ and $E$-Adams spectral sequences. This result holds for \emph{all} heights $n$ and primes $p$. 

\begin{lemma}[Hovey-Strickland]\label{prop:Enss}
    Let $M$ and $N$ be $E(n)_*E(n)$-comodules. Then, for all $s$ and $t$, there is an isomorphism
    \[
\Ext^{s,t}_{E(n)_*E(n)}(M,N) \simeq \Ext^{s,t}_{E_*E}(M \otimes_{E(n)_*} E_{*},N \otimes_{E(n)_*} E_{*}).
    \]
\end{lemma}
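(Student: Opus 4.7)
The plan is to exploit the Landweber exactness of $E$ over $E(n)$ to realize the asserted isomorphism as a base-change-of-Hopf-algebroids statement in the spirit of Hovey-Strickland.

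First I would establish the key structural identity
\[
E_*E \simeq E_* \otimes_{E(n)_*} E(n)_*E(n) \otimes_{E(n)_*} E_*,
\]
by applying Landweber exactness twice: $E_*E \simeq E_* \otimes_{E(n)_*} E(n)_*E$, and then $E(n)_*E \simeq E(n)_*E(n) \otimes_{E(n)_*} E_*$. This identifies $(E_*,E_*E)$ as the Hopf algebroid obtained from $(E(n)_*,E(n)_*E(n))$ by base change along $E(n)_* \to E_*$. In particular, the base change functor $F \coloneqq - \otimes_{E(n)_*} E_*$ carries extended $E(n)_*E(n)$-comodules to extended $E_*E$-comodules via the formula
\[
F(E(n)_*E(n) \otimes_{E(n)_*} J) \simeq E_*E \otimes_{E_*} F(J).
\]

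Next, since $F$ is exact on the category of $E(n)_*E(n)$-comodules (again by Landweber exactness) and preserves extended objects, applying $F$ to the cobar resolution of $N$ yields the cobar resolution of $F(N)$ in $E_*E$-comodules. I would then compute both sides of the desired isomorphism via these resolutions: after applying the relevant $\Hom$ functors and using the standard adjunction between extended comodules and underlying modules, the Ext groups on each side are realized as the cohomology of cochain complexes whose $s$-th terms compare $\Hom_{E(n)_*}$ on the source with $\Hom_{E_*}$ on the base-changed side.

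The main obstacle is the term-by-term identification of these $\Hom$ complexes — base change does not preserve $\Hom$ of arbitrary modules, so something extra is needed. This is precisely the content of the Hovey-Strickland result that, for the Landweber-exact pair $E(n) \to E$, base change is a fully faithful functor from $E(n)_*E(n)$-comodules to $E_*E$-comodules. Granting this, the $\Hom$ comparison is automatic at each cobar degree, and taking cohomology then produces the required Ext isomorphism.
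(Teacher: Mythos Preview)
Your proposal is correct, but it is considerably more elaborate than the paper's argument. The paper simply cites Hovey--Strickland's Theorem~C, which says that base change along $E(n)_* \to E_*$ is an \emph{equivalence} of abelian categories between $E(n)_*E(n)$-comodules and $E_*E$-comodules; the Ext isomorphism is then automatic, since equivalences of abelian categories preserve all derived functors. You instead reconstruct this consequence by hand: you show that $F$ is exact and carries the cobar resolution of $N$ to that of $F(N)$, and then invoke only the fully faithful part of Hovey--Strickland to match the Hom complexes termwise. This works, and has the minor virtue of making explicit which ingredients are actually used (exactness plus fully faithfulness rather than essential surjectivity), but once one is citing Hovey--Strickland anyway there is no real gain over the one-line argument. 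Note also that your detour through the module-level Homs via the extended-comodule adjunction is unnecessary: fully faithfulness already gives $\Hom_{E(n)_*E(n)}(M,I^s) \simeq \Hom_{E_*E}(F(M),F(I^s))$ directly at the comodule level.
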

\begin{proof}
    By~\cite[Theorem C]{hoveystricklandcomod} the functor that takes $M$ to $M \otimes_{E(n)_*}E_*$ defines an equivalence of categories between $E(n)_*E(n)$-comodules and $E_*E$-comodules.
\end{proof}
This implies that $\Ext_{E(1)_*E(1)}^{s,t}(E(1)_*,E(1)_*) =  \Ext_{E_*E}^{s,t}(E_{\ast},E_{\ast})$ for all $s$ and $t$. We start with a calculation described in~\cite[Section 6]{hovsad}.\footnote{Note that there is a small typo in~\cite{hovsad}. Namely the $\Z/16k$ referred to by Hovey-Sadofksy in filtration degree 1 of the $8k-1$-stem should actually refer to the 2-primary part of $\Z/16k$.}
\begin{prop}
    Let $p = 2$. Then
    \[
\Ext_{E(1)_*E(1)}^{s,t}(E(1)_*,E(1)_*)= \begin{cases}
    \Z_{(2)} & t=0,s=0 \\
    \Q/\Z_{(2)} & t=0,s=2\\
    \Z/2^{k+2} & t=2^{k+1}m,m \not \equiv 0 \mod (2),k \ne 0, s=1 \\
    \Z/2 & t = 4t'+2, s=1, t' \in \Z \\
    \Z/2 & s \ge 2, t=\text{even} \\
    0 & \text{else.}
\end{cases}
    \]
\end{prop}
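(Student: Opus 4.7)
The plan is to reduce to a continuous cohomology calculation and then perform it by hand. Applying the Hovey--Strickland change of rings (\Cref{prop:Enss}) with $M=N=E(1)_*$ and observing $E(1)_*\otimes_{E(1)_*}E_*=E_*$, we identify $\Ext^{s,t}_{E(1)_*E(1)}(E(1)_*,E(1)_*)$ with the $L$-complete group $\wExt^{s,t}_{\mE E}(E_*,E_*)$. Since $\mE S^{0}=E_*$ is trivially pro-free, \Cref{thm:COR} further identifies this with $H_c^{s}(\G_1,E_t)$. This shifts the burden to a direct computation of the continuous cohomology of the Morava stabilizer group at height $1$ and $p=2$ with coefficients in $E_*=\Z_2[u^{\pm 1}]$.

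At $p=2$ the group $\G_1$ is the Lubin--Tate deformation group $\Z_2^\times$. The key algebraic fact is the topological splitting $\Z_2^\times \cong \{\pm 1\}\times(1+4\Z_2) \cong \Z/2\times\Z_2$, generated by $-1$ and a topological generator $\gamma=5$ respectively, acting on $u^{k}$ via $(-1)^{k}u^{k}$ and $\gamma^{k}u^{k}$. I would run the Lyndon--Hochschild--Serre spectral sequence
\[
E_{2}^{p,q}=H^{p}\bigl(\Z/2;H^{q}(1+4\Z_2;E_*)\bigr) \Longrightarrow H_{c}^{p+q}(\Z_2^\times;E_*).
\]
Since $1+4\Z_2$ has cohomological dimension $1$, only $q=0,1$ contribute: $H^{0}=\Z_2$ concentrated in internal degree $0$, while in internal degree $-2k$ (with $k\ne 0$) one has $H^{1}=\Z_2/(\gamma^{k}-1)\Z_2$. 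The $2$-adic valuation $v_{2}(\gamma^{k}-1)=2+v_{2}(k)$ (by the lifting-the-exponent formula for $\gamma=5$) gives exactly the cyclic groups $\Z/2^{k+2}$ appearing in the statement when $t=2^{k+1}m$ with $m$ odd and $k\neq 0$.

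Combining with the $\Z/2$-cohomology is essentially bookkeeping: the $-1$ action on $u^{k}$ is trivial if $k$ is even and is the sign representation if $k$ is odd, so for $t\equiv 2\pmod 4$ (where $k=-t/2$ is odd) taking $\Z/2$-invariants collapses $\Z/4$ to $\Z/2$, matching the $s=1$ row in the proposition. The $\Z/2$ rows for $s\ge 2$ with $t$ even then follow from the standard computation $H^{p\geq 1}(\Z/2;\Z/2^{a})=\Z/2$ (for trivial or sign action), which populates the $\Z/2$ pattern in total cohomological degree $\ge 2$.

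The hard part is the internal degree $0$ column: both $H^{0}(\Z/2;H^{1})=\Z_2$ and $H^{p}(\Z/2;\Z_2)$ in higher $p$ contribute, and one needs to recognise that the divisible group $\Q/\Z_{(2)}$ in $\Ext^{2,0}$ arises from assembling these infinite towers. The cleanest way to see this, and the route \cite{hovsad} actually takes, is to pass through the chromatic spectral sequence: the short exact sequence $0\to\Z_{(2)}\to\Q\to\Q/\Z_{(2)}\to 0$ identifies the $v_1$-periodic part of $\Ext^{0}(BP_*/2^\infty)$ as $\Q/\Z_{(2)}$, and the boundary map in the chromatic spectral sequence delivers this class in $\Ext^{2,0}$ while simultaneously organising the $\alpha$-family $\Z/2^{k+2}$ at $s=1$. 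I would therefore prove the statement by invoking this chromatic computation from \cite[Section~6]{hovsad} (with the typo noted in the footnote corrected), whose output matches the output of the $\G_1$-cohomology calculation sketched above.
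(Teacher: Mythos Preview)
Your first step contains a genuine error: \Cref{prop:Enss} identifies $\Ext_{E(1)_*E(1)}^{s,t}(E(1)_*,E(1)_*)$ with the \emph{ordinary} comodule Ext group $\Ext_{E_*E}^{s,t}(E_*,E_*)$, not with the $L$-complete $\wExt_{\mE E}^{s,t}(E_*,E_*)$. These two are different; indeed the whole point of \Cref{thm:specseq} is to build a spectral sequence passing from the former to the latter. Consequently \Cref{thm:COR} does not apply here, and the continuous cohomology $H_c^*(\G_1,E_*)$ you go on to compute is not the group asked for in the proposition.

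You can see the mismatch directly against the statement: the proposition has $\Z_{(2)}$ in bidegree $(0,0)$, nothing in $(1,0)$, and $\Q/\Z_{(2)}$ in $(2,0)$, whereas your computation of $H_c^*(\G_1,E_*)$ yields $\Z_2$ in $(0,0)$ and $(1,0)$ and $\Z/2$ in $(2,0)$. What you have actually calculated is the \emph{next} result in the paper, which the paper deduces from the present proposition via \Cref{thm:specseq} (the extra $\Z_2$ in bidegree $(1,0)$ coming from $L_1(\Q/\Z_{(2)})\simeq\Z_2$). The paper does not prove this proposition at all; it simply records the chromatic computation from \cite[Section~6]{hovsad}, which is precisely what you fall back on in your final paragraph. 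So the correct argument is just that citation, and the continuous-cohomology reduction should be dropped.
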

We now run the spectral sequence of~\Cref{thm:specseq}. Note that~\Cref{examp:zp} computes the $E_2$-term of this spectral sequence. It can be checked that the differentials are $d_r:E_r^{i,s} \to E_r^{i+r,s+r-1}$; since the spectral sequence is non-zero only for $i=0$ and $i=1$ we see that there are no differentials in the spectral sequence, and that it collapses at the $E_2$-page. We deduce the following:
   \begin{theorem}
      Let $p = 2$. Then
    \[
H^s_c(\G_1,E_t) = \begin{cases}
    \Z_2 & t=0,s=0,1 \\
    \Z/2^{k+2} & t=2^{k+1}m,m \not \equiv 0 \mod (2),k \ne 0, s=1 \\
    \Z/2 & t = 4t'+2, s=1,t' \in \Z \\
    \Z/2 & s \ge 2, t=\text{even} \\
    0 & \text{else.}
\end{cases}
    \]
\end{theorem}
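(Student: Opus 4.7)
The plan is to feed the stated height-$1$ Ext computation into the spectral sequence of \Cref{thm:specseq}, specialised to $X = Y = S^0$. Since $E_* S^0 = E_*$ is simultaneously projective and flat over $E_*$, that theorem applies and, combined with \Cref{thm:COR}, produces a convergent spectral sequence
\[
E_2^{i,s,t} \;=\; L_i \, \Ext_{E_*E}^{s,t}(E_*, E_*) \;\Longrightarrow\; \wExt_{\mE E}^{s-i,t}(E_*, E_*) \;\cong\; H_c^{s-i}(\G_1, E_t),
\]
where the derived functors $L_i$ are taken over $\Zp = \Z_{(2)}$. By \Cref{prop:Enss} the input is identified with $\Ext_{E(1)_*E(1)}^{s,t}(E(1)_*, E(1)_*)$, whose structure is recorded in the previous proposition.

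Next I would compute $L_i$ piece by piece using \Cref{examp:zp} and \Cref{rem:tamemodules}. Since $\Zp$ has Krull dimension $1$, only $L_0$ and $L_1$ are potentially non-zero. Explicitly: $L_0 \Z_{(2)} = \Z_2$ and $L_1 \Z_{(2)} = 0$; the injective module $\Q/\Z_{(2)}$ yields the crucial asymmetric pair $L_0 \Q/\Z_{(2)} = 0$ but $L_1 \Q/\Z_{(2)} = \Z_2$; and each bounded $2$-torsion module $\Z/2^k$ that occurs is already $L$-complete with vanishing higher derived functors. Thus the entire $E_2$-page is concentrated in the two columns $i = 0, 1$, and the column $i = 1$ is supported only where the input is $\Q/\Z_{(2)}$, i.e.\ at $(s,t) = (2,0)$.

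It then remains to verify that no differentials act. With the convention $d_r \colon E_r^{i,s} \to E_r^{i+r, s+r-1}$, every $d_r$ with $r \geq 2$ lands in column $i + r \geq 2$, where the $E_2$-page vanishes identically; hence the spectral sequence collapses at $E_2$. Reading off the $E_\infty$-page along the antidiagonals $s' = s - i$ — keeping track of which column each entry sits in — recovers the stated table. The only slightly non-tautological contribution is the entry $(i,s,t) = (1,2,0)$: the class $L_1 \Q/\Z_{(2)} = \Z_2$ lands on the $s' = 1$ antidiagonal and accounts for the group $H_c^1(\G_1, E_0) = \Z_2$, which has no counterpart on the column $i = 0$.

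The extension problems are trivial: on every antidiagonal at most one of the two columns contributes, except along $s' = s = 1$ at $t = 0$ (where only $i=1$ contributes) and the higher even-$t$, $s \ge 2$ region (where only $i=0$ contributes). The main potential obstacle would have been a non-trivial interaction between the two columns, but the bidegree constraints rule this out; the residual work is the bookkeeping of matching the $(i,s,t)$ pieces listed above to the claimed cases of $H_c^s(\G_1, E_t)$.
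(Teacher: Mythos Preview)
Your proposal is correct and follows essentially the same route as the paper: run the spectral sequence of \Cref{thm:specseq} with $X=Y=S^0$, compute the $L_i$ of each Ext group via \Cref{examp:zp}, observe that the two-column shape forces all differentials $d_r\colon E_r^{i,s}\to E_r^{i+r,s+r-1}$ to vanish for $r\ge 2$, and read off the result. You are in fact slightly more explicit than the paper, which does not spell out the identification of the abutment with continuous cohomology via \Cref{thm:COR} or address the (trivial) extension problems.
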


\bibliographystyle{amsalpha}
	\bibliography{BibtexDatabase}
\end{document}